\newcommand{\Br}{\text{\rm Br}}
\newcommand{\Gal}{\text{\rm Gal}}
\newcommand{\im}{\text{\rm im}}
\newcommand{\codim}{\text{\rm codim}}
\newcommand{\rk}{\text{\rm mult}}
\begin{document}

\keywords{Biquadratic extension, Galois module, Brauer group, embedding problem}
\subjclass[2010]{Primary 12F10; Secondary 16D70}

\thanks{The second author is partially supported by the Natural Sciences and Engineering Research Council of Canada grant R0370A01.  He also gratefully acknowledges the Faculty of Science Distinguished Research Professorship, Western University, in years 2004/2005 and 2020/2021. Finally he also gratefully acknowledges support of Western Academy for Advanced Research during the year 2022/2023. The fourth author is partially supported by 2021 Wellesley College Faculty Awards. The fifth author was supported  in part by National Security Agency grant MDA904-02-1-0061. The sixth author is funded by Vingroup Joint Stock Company and supported by Vingroup Innovation Foundation (VinIF) under the project code VINIF.2021.DA00030.}

\title[Quaternion algebras and square power classes]{Quaternion algebras and square power classes over biquadratic extensions}

\author[F.~Chemotti]{Frank Chemotti}
\email{fchemotti@gmail.com}

\author[J.~Min\'{a}\v{c}]{J\'{a}n Min\'{a}\v{c}}
\address{Department of Mathematics, Western University, London, Ontario, Canada N6A 5B7}
\email{minac@uwo.ca}

\author[T.T.~Nguyen]{Tung T.~Nguyen}
\address{Department of Mathematics, Western University, London, Ontario, Canada N6A 5B7}
\email{tungnt@uchicago.edu}

\author[A.~Schultz]{Andrew Schultz}
\address{Department of Mathematics, Wellesley College, 106 Central Street, Wellesley, MA \ 02481 \ USA}
\email{andrew.c.schultz@gmail.com}

\author[J.~Swallow]{John Swallow}
\address{Office of the President, Carthage College, 2001 Alford Park Drive, Kenosha, WI \ 53140 \ USA}
\email{jswallow@carthage.edu}

\author[N.D.~T\^an]{{\fontencoding{T5}\selectfont Nguy\~ \ecircumflex n Duy T\^an}}
\address{School of Applied Mathematics and Informatics, Hanoi University of Science and
Technology, Hanoi, Vietnam}
\email{tan.nguyenduy@hust.edu.vn}

\dedicatory{Dedicated to Professor Moshe Jarden with gratitude and admiration}

\date{\today}

\begin{abstract}
Recently the Galois module structure of square power classes of a field $K$ has been computed under the action of $\Gal(K/F)$ in the case where $\Gal(K/F)$ is the Klein $4$-group.  Despite the fact that the modular representation theory over this group ring includes an infinite number of non-isomorphic indecomposable types, the decomposition for square power classes includes at most $9$ distinct summand types.  In this paper we determine the multiplicity of each summand type in terms of a particular subspace of $\Br(F)$, and show that all ``unexceptional" summand types are possible.
\end{abstract}

\maketitle


\newtheorem*{theorem*}{Theorem}
\newtheorem*{lemma*}{Lemma}
\newtheorem{theorem}{Theorem}
\newtheorem{proposition}{Proposition}[section]
\newtheorem{corollary}[proposition]{Corollary}
\newtheorem{lemma}[proposition]{Lemma}

\theoremstyle{definition}
\newtheorem*{definition*}{Definition}
\newtheorem*{remark*}{Remark}
\newtheorem{example}[proposition]{Example}



\parskip=8pt plus 2pt minus 2pt

\section{Introduction}

\subsection{Motivation} 

For a field $K$ with $\text{\rm char}(K) \neq 2$, the square power classes $K^\times/K^{\times 2}$ are perhaps the most well-known parameterizing space for Galois extensions of $K$.  Indeed, it is an exercise in elementary Galois theory that the $n$-dimensional subspaces of the $\mathbb{F}_2$-vector space $K^\times/K^{\times 2}$ are in correspondence with the elementary $2$-abelian extensions of $K$ of rank $n$.  When $p$ is an odd prime number and $K$ contains a primitive $p$th root of unity, Kummer theory provides a generalization of this result, connecting $p$th power classes of $K$ to elementary $p$-abelian extensions of $K$.

More recently, these power classes (and several of their generalizations, including certain cohomological cousins) have received renewed interest in Galois modules.  One source of this interest comes from the observation that if $K/F$ is a Galois extension with $\Gal(K/F) = G$, then the $\mathbb{F}_p[G]$-modules of $K^\times/K^{\times p}$  are in correspondence with elementary $p$-abelian extensions of $K$ that are Galois over $F$.  In fact, if $L/K$ is the field extension corresponding to some $\mathbb{F}_p[G]$-submodule $N$, it is often possible to compute $\Gal(L/F)$ strictly in terms of the module-theoretic structure of $N$, together with a small amount of field-theoretic information attached to the elements in $N$.  The interested reader can consult \cite{Waterhouse} for some first results in this vein, and \cite{Schultz} for a generalization; in both of these cases, the group $G$ is assumed to be a finite, cyclic $p$-group.  In this way, it is typically possible to use power classes of a field as a parameterizing space for a much broader classes of Galois theoretic objects.  Said another way: the Galois module structure of power classes encodes a tremendous amount of information about Galois extensions of $K/F$, and for this reason the search for such decompositions is of great interest.

Quite a lot has been done in this area in the past twenty years, though results in the special case of local fields (see \cite{Bo,F}) have been known for more than 50 years.  The more recent work began with \cite{MS}
\begin{IJMversion}
 (which appeared in this journal in 2003)
\end{IJMversion}
 in which two of the authors provided a module decomposition of $p$th power classes in the case where $\Gal(K/F) \simeq \mathbb{Z}/p\mathbb{Z}$.  Since that time there has been a wealth of additional module decompositions.  Most of these results have stuck to the case where $\Gal(K/F)$ is a cyclic $p$ group, and they include: the structure of $K^\times/K^{\times p}$ when $\Gal(K/F) \simeq \mathbb{Z}/p^n\mathbb{Z}$ in \cite{MSS1}; the structure of associated cohomology groups in \cite{LMSS,LMS}; and characteristic $p$-analogs of both of these objects in \cite{BS,BLMS,Schultz}.  

The last year has seem some dramatic new developments in the study of Galois modules of this ilk, with module decompositions being determined in a number of cases where the modular representation theory allows for an infinite number of indecomposable types.  For example, the module structure for higher power classes $K^\times/K^{\times p^m}$ have been computed in the case where $\Gal(K/F) \simeq \mathbb{Z}/p^n\mathbb{Z}$ in \cite{MSS2b}.  However, there are also module computations in the case where $\Gal(K/F)$ is not a cyclic $p$-group.  This includes the decomposition of power classes (or their characteristic $p$-analogs) when $\Gal(K/F)$ is \emph{any} finite $p$-group, albeit under the assumption that the maximal pro-$p$ quotient of the absolute Galois group of $F$ is a finitely generated free pro-$p$ group and that either $F$ contains a primitive $p$th root of unity or $\text{char}(F) = p$ (see \cite{HMS}). It also includes the decomposition of square power classes in the case where $\Gal(K/F)$ is the Klein $4$-group in \cite{CMSS}.

Although \cite{CMSS} provides a list of the indecomposable types which can appear in the decomposition of $K^\times/K^{\times 2}$ when $\Gal(K/F) \simeq \mathbb{Z}/2\mathbb{Z}\oplus\mathbb{Z}/2\mathbb{Z}$ and characterizes the appearance of these summands in terms of certain simultaneous equations in the module, it does not give an explicit, arithmetic mechanism for determining the multiplicities of each summand type.  This paper works to fill this void, and in doing so makes novel contributions to our understanding of these power classes and their manifestations in the Galois-theoretic and arithmetic properties of the corresponding field.  In particular, we will use the Brauer group of $F$ --- including particular distinguished elements which are tied to the extension $K/F$ --- as our key tool for analyzing the decomposition of $K^\times/K^{\times 2}$.  The main results of this paper are Theorems \ref{th:main.unexceptional} and \ref{th:main.exceptional}, with Proposition \ref{prop:solving.equations.with.Brauer} playing a significant role along the way.  We showcase these results by determining module-theoretic invariants of certain biquadratic extensions of $\mathbb{Q}$ via Legendre symbols and the local-global principle.

\subsection{Statement of main results}

We first establish terminology and notation.  Throughout we will let $F$ denote a field with $\text{\rm char}(F) \neq 2$, and $K/F$ will be an extension with $\Gal(K/F) \simeq \mathbb{Z}/2\mathbb{Z}\oplus\mathbb{Z}/2\mathbb{Z}$.  We let $a_1,a_2 \in F$ be given so that $K = F(\sqrt{a_1},\sqrt{a_2})$, and we write $\sigma_1$ and $\sigma_2$ for their duals in $\text{Gal}(K/F)$; that is, we have $\sigma_i(\sqrt{a_j}) = (-1)^{\delta_{ij}}\sqrt{a_j}$.  For $i \in \{1,2\}$ we define $K_i = F(\sqrt{a_i})$.  Write $H_i$ for the subgroup of $\Gal(K/F)$ which fixes elements in $K_i$, and $\overline{G_i}$ for the corresponding quotient group: $\overline{G_i} := \Gal(K_i/F) = \{\overline{\text{\rm id}},\overline{\sigma_i}\}.$  In the same spirit, write $K_3 = F(\sqrt{a_1a_2})$, denote the subgroup of $\Gal(K/F)$ which fixes $K_3$ as $H_3$, and use $\overline{G_3}$ for the corresponding quotient $G/H_3 = \Gal(K_3/F)$.  For the sake of uniformity in notation, let $H_0 = \{\text{\rm id}\}$ (the elements which fix the extension $K/F$) and $H_4 = \Gal(K/F)$ (the elements which fix the extension $F/F$), and use $\overline{G_0}$ and $\overline{G_4}$ for their quotients. 

For convenience we will write $K^\times/K^{\times 2}$ as $J(K)$.  When $\gamma \in K^\times$, we use $[\gamma]$ to denote the class represented by $\gamma$ in $J(K)$.  Occasionally we will be interested in square power classes of some other field $L$, in which case we denote the set by $J(L)$, and denote its elements as $[\gamma]_L$.  

The decomposition of $J(K)$ provided by \cite{CMSS} is given in the following proposition.  Note that in addition to free modules over certain quotients of $\mathbb{F}_2[G]$, there are summands expressed in terms of $\mathbb{F}_2[G]$-modules $\Omega^1, \Omega^2, \Omega^{-1}$, and $\Omega^{-2}$.  These are indecomposable modules with $\dim\Omega^1 = \dim\Omega^{-1} = 3$ and $\dim\Omega^2 = \dim\Omega^{-2} = 5$; they are most easily described in terms of generators and relations.  For example, we have
\begin{align*}
\Omega^1 &= \left\langle a,b,c | a^{1+\sigma_1} = b = c^{1+\sigma_2}, a^{1+\sigma_2} = c^{1+\sigma_1} = 1\right\rangle\\
\Omega^2 &= \left\langle a,b,c,d,e | a^{1+\sigma_1} = b = c^{1+\sigma_2}, c^{1+\sigma_1} = d = e^{1+\sigma_2}, a^{1+\sigma_2} = e^{1+\sigma_1} = 1\right\rangle.
\end{align*}The interested reader can see \cite{CMSS} for a quick review of the basics of these modules, or could consult \cite[Theorem 4.3.3]{Benson} for complete details on the indecomposable types that appear over a generic $\mathbb{F}_2[\mathbb{Z}/2\mathbb{Z}\oplus\mathbb{Z}/2\mathbb{Z}]$-module.

\begin{proposition}\label{prop:CMSS.main}
\suppressfloats[t]
Suppose that $\text{\rm char}(K) \neq 2$ and that $\Gal(K/F) \simeq \mathbb{Z}/2\mathbb{Z}\oplus\mathbb{Z}/2\mathbb{Z}$.  Let $J(K) = K^\times/K^{\times 2}$.  Then as an $\mathbb{F}_2[\Gal(K/F)]$-module we have
$$J(K) \simeq X \oplus Y_0 \oplus Y_1 \oplus Y_2 \oplus Y_3 \oplus Y_4 \oplus Z_1\oplus Z_2,$$ where
\begin{itemize}
\item $X$ is isomorphic to one of $\{0\}, \mathbb{F}_2,\mathbb{F}_2\oplus\mathbb{F}_2, \Omega^{-1}, \Omega^{-2},$ or $\Omega^{-1}\oplus\Omega^{-1}$;
\item for each $i \in \{0,1,2,3,4\}$, the summand $Y_i$ is a direct sum of modules isomorphic to $\mathbb{F}_2[\overline{G_i}]$; and
\item for each $i \in \{1,2\}$, the summand $Z_i$ is a direct sum of modules isomorphic to $\Omega^i$.
\end{itemize}
\end{proposition}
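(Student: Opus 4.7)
The plan is to combine the Basev--Heller--Reiner classification of indecomposable $\mathbb{F}_2[\Gal(K/F)]$-modules (see \cite[Theorem 4.3.3]{Benson}) with the Kummer-theoretic and Galois-cohomological structure of $J(K)$ in order to eliminate all but the nine summand types listed. The classification produces, alongside the five permutation modules $\mathbb{F}_2[\overline{G_i}]$ and the Heller syzygies $\Omega^{\pm n}$ for $n \geq 1$, an infinite family of string and band indecomposables; the key is to use arithmetic constraints coming from $K^\times$ to rule out the infinite family entirely and to restrict $n$ to $\{\pm 1, \pm 2\}$.

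First I would compute the $H$-invariants and $H$-coinvariants of $J(K)$ for each subgroup $H$ of $\Gal(K/F)$. Starting from the squaring sequence $1 \to \{\pm 1\} \to K^\times \to K^\times \to 1$ and Hilbert 90 (so $H^1(H, K^\times) = 0$), standard arguments identify $J(K)^H$ with the image of $J(K^H)$ in $J(K)$ and express the Tate cohomology $\hat{H}^0(H, J(K))$ in terms of the norm residues $(K^H)^\times / (N_{K/K^H}(K^\times) \cdot (K^H)^{\times 2})$. Applied simultaneously to all five subgroups of $\Gal(K/F)$, these identifications constrain the socle and top structure of $J(K)$: the multiplicity of the free summand $Y_0$ is forced by a norm-residue count, and any string or band indecomposable of dimension larger than five produces top or socle configurations incompatible with Hilbert 90 on the intermediate fields $K_i$. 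I would then peel off summands in order of decreasing projectivity --- first $Y_0$ via the relative trace idempotent for $\Gal(K/F)$, then the $Y_i$ for $i \in \{1, 2, 3\}$ via relative trace idempotents associated to $K_i / F$, and finally $Y_4$. The summands $Z_1, Z_2$ of type $\Omega^1, \Omega^2$ emerge next from classes $[\gamma]$ whose $\Gal(K/F)$-orbits obstruct relative projectivity exactly according to the generator--relation presentations displayed in the excerpt.

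The main obstacle will be isolating the exceptional summand $X$ and showing that it falls into the six-element list. After all (relatively) projective pieces are stripped away, the remainder has no summand of type $\mathbb{F}_2[\overline{G_i}]$ for $i < 4$, no further $\Omega^{\pm 2}$ summands beyond those collected into $Z_2$, and only a bounded amount of ``Heller-degenerate'' behavior. The bound $\dim_{\mathbb{F}_2} X \leq 5$ together with the tight list of allowed isomorphism types should follow by simultaneously controlling $\dim X^{\Gal(K/F)}$ and $\dim X_{\Gal(K/F)}$ through the long exact sequence attached to $1 \to F^\times \to K^\times \xrightarrow{1 - \sigma_i} K^\times$, combined with the fact that the only arithmetically distinguished classes surviving all peelings correspond to a short list of Brauer-theoretic obstructions tied to $K / F$ --- precisely the data made explicit elsewhere in the present paper. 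Krull--Schmidt then assembles the pieces into the stated decomposition.
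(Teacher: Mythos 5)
The proposition you are proving is not proved in this paper at all: it is quoted verbatim from \cite{CMSS}, and what the present paper supplies (Sections 2 and 3) is a review of how the summands are \emph{constructed} there --- free modules $M_x=\langle[k_x]\rangle$ with $[N_{K/F}(k_x)]=[x]$ for a basis of $\mathfrak{A}$, explicit solutions $[\gamma_{i,x}]$ of the diagrams in Figure 1 for bases of complements in the filtration $\mathfrak{A}\subseteq\mathfrak{V}\subseteq\mathfrak{W}_B\subseteq\mathfrak{B}$ (and its companions), and the exceptional summand $X$ built from the homomorphism $T$ --- followed by a proof that these explicit submodules exhaust $J(K)$. Your plan inverts this: you start from the Basev--Heller--Reiner classification and Krull--Schmidt and try to \emph{exclude} the unwanted indecomposables. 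That strategy has a structural gap: $J(K)$ is in general infinite-dimensional and $\mathbb{F}_2[\Gal(K/F)]$ has infinite (tame) representation type, so an arbitrary module need not decompose as a direct sum of finite-dimensional indecomposables at all; the classification in \cite[Theorem 4.3.3]{Benson} only lists the finite-dimensional ones, and Krull--Schmidt gives uniqueness, not existence. Establishing that a decomposition into the listed types exists is precisely the theorem, and in \cite{CMSS} it is obtained by the explicit constructions above, not by elimination.

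Two further steps in your outline would fail as written. First, there are no ``relative trace idempotents'' to peel off $Y_0$ or the $Y_i$: in characteristic $2$ the algebra $\mathbb{F}_2[\Gal(K/F)]$ is local, its only idempotents are $0$ and $1$, and the relative trace maps $\mathrm{Tr}_H^G$ are not idempotent and do not split summands; splitting off the free and $\mathbb{F}_2[\overline{G_i}]$-parts genuinely requires the arithmetic lifts (norm preimages and solutions of the simultaneous equations encoded by the diagrams). Second, the claim that Hilbert 90 on the intermediate fields forces every string or band indecomposable of dimension larger than five to disappear is asserted, not proved, and it is exactly the hard content: the exclusion rests on arithmetic facts such as the image of $T$ landing in a space of dimension at most three and the norm condition $[N_{K/K_1}(K^\times)]\cap[N_{K/K_2}(K^\times)]\subseteq[F^\times]$ that separates $\Omega^{-1}\oplus\Omega^{-1}$ from $\Omega^{-2}$; invariant/coinvariant counts from the squaring sequence do not by themselves rule out larger (or infinite-dimensional) indecomposables. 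So while your cohomological bookkeeping would reproduce some rank formulas once a decomposition is known, it does not yield the decomposition itself.
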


We will refer to the $X$ summand from this theorem as the exceptional summand in the decomposition; the other summands will be called the unexceptional summands.  

There are two main results in this manuscript, both of which allow us to understand the module-theoretic invariants attached to $J(K)$ in terms of $\Br(F)$.  For $a,b \in F^\times$, we write $(a,b)$ for the class of the quaternion algebra in $\Br(F)$.  We will use additive notation for Brauer group elements.  Of particular importance in our results is the following subspace of $\Br(F)$: $$\mathcal{S} = \left\langle (a_1,a_1),(a_1,a_2),(a_2,a_2)\right\rangle_{\mathbb{F}_2}.$$  

It will be useful for us to note that if $c,d \in F^\times$ are equivalent in $J(K)$, then $(a_1,c)-(a_1,d) \in \mathcal{S}$, and likewise $(a_2,c)-(a_2,d) \in \mathcal{S}$.  To see that this is true, observe that if $c=dk^2$ for some $k \in K^\times$, then by Kummer theory we must have $k^2 = f^2 a_1^{\varepsilon_1}a_2^{\varepsilon_2}$, where $\varepsilon_1,\varepsilon_2 \in \{0,1\}$.  But then we have $$(a_1,c) - (a_1,d) = (a_1,df^2a_1^{\varepsilon_1}a_2^{\varepsilon_2}) -(a_1,d)= \varepsilon_1(a_1,a_1)+\varepsilon_2(a_1,a_2) \in \mathcal{S}.$$  A similar calculation shows that $(a_2,c)-(a_2,d) \in \mathcal{S}$.  For this reason, for a given $[f] \in [F^\times] \subseteq J(K)^G$, it is reasonable for us to examine whether $(a_1,f)$ and/or $(a_2,f)$ is in the subspace $\mathcal{S}$.

To describe our first main result, observe that each of the unexceptional summands  from Proposition \ref{prop:CMSS.main} is a direct sum of modules of a particular isomorphism type.  One is naturally interested in knowing the multiplicity of each summand.  The multiplicities of the unexceptional summands are determined in \cite{CMSS} by the codimension of certain subspaces of $[F^\times]$ that are phrased in terms of solvability of certain simultaneous equations.  When $V_1$ and $V_2$ are $\mathbb{F}_2$-subspaces with $V_1 \subseteq V_2$, we will write $\codim(V_1 \hookrightarrow V_2)$ to denote the codimension of $V_1$ within $V_2$.  In this paper, we will express the multiplicities of the unexceptional summands in terms of the following subspaces of $[F^\times]$:

\begin{equation}\label{eq:Brauer.subspaces}
\begin{gathered}
A = [N_{K/F}(K^\times)] \qquad 
B = \{[f]: (a_1,f) \in \mathcal{S}\}\\ 
C = \{[f]: (a_2,f) \in \mathcal{S}\}\qquad 
D = \{[f] : (a_1a_2,f) \in \mathcal{S}\}\\
V = B \cap C \qquad  
W_B = \{[f] \in B: \exists [g] \in C \text{ so that }(a_2,f)+(a_1,g) \in \mathcal{S}\}\\
W_C = \{[g] \in C: \exists[f] \in B \text{ so that }(a_2,f)+(a_1,g) \in \mathcal{S}\}\\
W_D = \{[f][g]: [f] \in B, [g] \in C \text{ and }(a_2,f)+(a_1,g) \in \mathcal{S}\}.
\end{gathered}
\end{equation}

It is clear from these definitions that $V$ is a subspace of each of $B$, $C$, $D$, $W_B,W_C$, and $W_D$; that $W_B$ is a subspace of $B$; that $W_C$ is a subspace of $C$; and that $W_D$ is a subspace of $D$.  One can see easily that $B \cap D = C \cap D = V$.  Since it is well known that $(a_i,f) = 0$  when $f \in N_{K_i/F}(K_i^\times)$, we also have $A$ is a subspace of $V$.

\begin{theorem}\label{th:main.unexceptional}
Let $\varepsilon=1$ in the case where $[N_{K/K_1}(K^\times)] \cap [N_{K/K_2}(K^\times)] \subseteq [F^\times]$, and otherwise let $\varepsilon=0$.
Then the multiplicities of the unexceptional summands $Y_0,Y_1,Y_2,Y_3,Y_4,Z_1$, and $Z_2$ from Proposition \ref{prop:CMSS.main} are: 
$\rk(Y_0) = \dim A$; $\rk(Z_1) = \codim(A \hookrightarrow V)$; $\rk(Z_2) = \codim(V \hookrightarrow W_B) = \codim(V \hookrightarrow W_C)$;
$\rk(Y_1)=\codim (W_B \hookrightarrow B)$; $\rk(Y_2)=\codim (W_C \hookrightarrow C)$;
$\rk(Y_3)=\codim (W_D \hookrightarrow D)$;
$\rk(Y_4)=\codim( B+C+D \hookrightarrow [F^\times])-\varepsilon.$
\end{theorem}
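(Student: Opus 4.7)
The plan is to show that each Brauer-theoretic subspace in \eqref{eq:Brauer.subspaces} coincides with the corresponding subspace of $[F^\times]$ used by \cite{CMSS} to express the ranks of the unexceptional summands, so that the rank formulas follow from that reference. The key translation device is the classical identity $(a,b)=0$ in $\Br(F)$ if and only if $b \in N_{F(\sqrt{a})/F}(F(\sqrt{a})^\times)$, combined with the observation recorded before the theorem that $[c]=[d]$ in $J(K)$ forces $(a_i,c)-(a_i,d) \in \mathcal{S}$ (and likewise for $a_1a_2$). For $[f] \in [F^\times] \subseteq J(K)$, the condition $(a_i,f) \in \mathcal{S}$ becomes equivalent to the existence of $\varepsilon_1,\varepsilon_2 \in \{0,1\}$ with $f \cdot a_1^{\varepsilon_1}a_2^{\varepsilon_2} \in N_{K_i/F}(K_i^\times)$; that is, the vanishing of $(a_i,-)$ modulo $\mathcal{S}$ on $[F^\times]$ detects membership in the image of $N_{K_i/F}$ up to the ambiguity of $K$-squares. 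This yields the desired correspondence between $B,C,D$ and the CMSS subspaces parameterizing classes arising from norms of $K_1,K_2,K_3$, and an analogous story matches $A$ with the image of $N_{K/F}$.

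With this dictionary in hand, I would deduce each rank by summand-by-summand bookkeeping. For $\rk(Y_0) = \dim A$: among the indecomposable types appearing in the decomposition, only $\mathbb{F}_2[G]$ has nonzero image under the full norm endomorphism $N = 1+\sigma_1+\sigma_2+\sigma_1\sigma_2$ (direct computation using the presentations of $\mathbb{F}_2[\overline{G_i}]$, $\Omega^1$, and $\Omega^2$ shows each is annihilated by $N$), so the image of $N_{K/F}$ on $J(K)$ is $A$ and has dimension $\rk(Y_0)$. The remaining unexceptional ranks follow analogously: each $\Omega^1$ summand contributes a fixed class to $V$ beyond $A$ via the relation $b = a^{1+\sigma_1} = c^{1+\sigma_2}$; each $\Omega^2$ introduces a coupled pair of norm relations via its intermediate generator $c$, matching (by Kummer theory) the Brauer identity $(a_2,f)+(a_1,g)\in \mathcal{S}$; and each $\mathbb{F}_2[\overline{G_i}]$ contributes an uncoupled fixed class that lies in $B,C,D$ respectively but not in $W_B,W_C,W_D$.

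The equality $\codim(V \hookrightarrow W_B) = \codim(V \hookrightarrow W_C)$ admits a clean direct argument. Given $[f] \in W_B$, the matching $[g] \in C$ with $(a_2,f)+(a_1,g) \in \mathcal{S}$ is determined modulo $V$, since any two valid choices $[g],[g']$ satisfy $(a_1,g-g') \in \mathcal{S}$, so $[g-g'] \in B \cap C = V$. This yields a well-defined homomorphism $W_B/V \to W_C/V$, which is injective (if a matching $[g]$ lies in $V$, then $(a_1,g) \in \mathcal{S}$ forces $(a_2,f) \in \mathcal{S}$, hence $[f] \in C \cap B = V$) and surjective by symmetry of the defining relation, hence an isomorphism.

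The main obstacle is the $\varepsilon$ correction in the formula for $\rk(Y_4)$. Each $Y_4 = \mathbb{F}_2$ summand contributes a fixed class in $[F^\times]$ outside $B+C+D$, suggesting the naive formula $\rk(Y_4) = \codim(B+C+D \hookrightarrow [F^\times])$. However, the exceptional summand $X$ --- which by Proposition~\ref{prop:CMSS.main} can take any of the six listed forms, including $\Omega^{-1}$ and $\Omega^{-2}$ --- may itself contribute a fixed class to $[F^\times]/(B+C+D)$. The correction $\varepsilon = 1$ records precisely this extra contribution, asserted to occur exactly when $[N_{K/K_1}(K^\times)] \cap [N_{K/K_2}(K^\times)] \subseteq [F^\times]$. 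Establishing this equivalence requires a case analysis on the possible forms of $X$, computing $X^G \cap [F^\times]$ modulo $B+C+D$ for each type, and matching the cases that yield an extra fixed class to the stated intersection condition; the intersection $[N_{K/K_1}(K^\times)] \cap [N_{K/K_2}(K^\times)]$ always sits inside $J(K)^G$ and may contain classes represented by $\sqrt{a_1 a_2}$, so the hypothesis that it is contained in $[F^\times]$ pins down precisely the $X$-type responsible for the extra fixed class. I expect this is where the bulk of the technical work lies.
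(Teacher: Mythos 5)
Your overall strategy --- identify the Brauer-theoretic subspaces $A,B,C,D,V,W_B,W_C,W_D$ with the subspaces of $[F^\times]$ used in \cite{CMSS} and then quote that paper's rank formulas --- is indeed the paper's strategy, and your direct argument that $W_B/V \simeq W_C/V$ is sound. But the step that carries all the weight is missing, and the substitute you propose for it does not work. The subspaces from \cite{CMSS} (the paper's $\mathfrak{A},\mathfrak{B},\mathfrak{C},\mathfrak{D},\mathfrak{V},\mathfrak{W}_B,\mathfrak{W}_C$) are defined by solvability \emph{in $J(K)$} of systems such as: there exists $\gamma \in K^\times$ with $[\gamma]^{1+\sigma_1}=[f]$ and $[\gamma]^{1+\sigma_2}=[1]$. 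They are not defined by $f$ lying in the norm groups $N_{K_i/F}(K_i^\times)$, so the classical criterion $(a,b)=0 \Leftrightarrow b \in N_{F(\sqrt{a})/F}(F(\sqrt{a})^\times)$, even after twisting $f$ by $a_1^{\varepsilon_1}a_2^{\varepsilon_2}$, addresses a different condition from the one you must match. Worse, your stated equivalence ``$(a_1,f)\in\mathcal{S}$ iff $f a_1^{\varepsilon_1}a_2^{\varepsilon_2}\in N_{K_1/F}(K_1^\times)$ for some $\varepsilon_1,\varepsilon_2$'' is false in general: twisting $f$ inside the symbol $(a_1,\cdot)$ only sweeps out the coset $(a_1,f)+\left\langle (a_1,a_1),(a_1,a_2)\right\rangle$, so a relation such as $(a_1,f)=(a_2,a_2)$ places $[f]$ in $B$ without making any twist of $f$ a norm from $K_1$; the generator $(a_2,a_2)$ of $\mathcal{S}$ cannot be absorbed this way.

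What is actually needed is the equivalence that the two simultaneous equations $[\gamma]^{1+\sigma_1}=[f]$, $[\gamma]^{1+\sigma_2}=[g]$ are solvable in $J(K)$ if and only if $(a_1,f)+(a_2,g)\in\mathcal{S}$; this is Proposition \ref{prop:solving.equations.with.Brauer}, the heart of the paper's proof, and it is established not by norm criteria over $F$ but by passing to $L=K(\sqrt{f},\sqrt{g})$ and applying the Fr\"ohlich/Ledet/Min\'a\v{c}--Smith criterion (Proposition \ref{prop:Frolich}) for central non-split embedding problems over $(\mathbb{Z}/2\mathbb{Z})^{\oplus 4}$ in both directions: one constructs $\gamma$ inside a solution field of the embedding problem, and conversely reads off the group extension $\Gal(K(\sqrt{\gamma},\sqrt{f},\sqrt{g})/F)$ from a solution $\gamma$. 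Nothing in your proposal performs or replaces this step, and your ``summand-by-summand bookkeeping'' still presupposes the unproven dictionary while re-deriving rank statements that \cite{CMSS} already supplies. Finally, your expectation that the $\varepsilon$-correction for $\rk(Y_4)$ is where the bulk of the work lies is misplaced: the dichotomy governed by whether $[N_{K/K_1}(K^\times)]\cap[N_{K/K_2}(K^\times)]\subseteq [F^\times]$ is built into the construction of $Y_4$ in \cite{CMSS} (where in one case an extra one-dimensional subspace $\mathfrak{X}$ tied to $X$ must be excised from the complement), and the paper simply imports it; no fresh case analysis on the isomorphism type of $X$ is required, whereas the genuinely hard ingredient --- the embedding-problem translation --- is the one your proposal omits.
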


Our second main result allows us to determine the module structure of the exceptional summand in terms of the subspace $\mathcal{S}$, together with one additional piece of data.

\begin{theorem}\label{th:main.exceptional}
Let $\varepsilon=1$ in the case where $[N_{K/K_1}(K^\times)] \cap [N_{K/K_2}(K^\times)] \subseteq [F^\times]$, and otherwise let $\varepsilon=0$.
Then the exceptional summand $X$ from Proposition \ref{prop:CMSS.main} is isomorphic to:
\begin{itemize}
\item $\{0\}$ in the case where $\dim(\mathcal{S}) = 3$;
\item $\mathbb{F}_2$ in the case where $\dim(\mathcal{S}) = 2$;
\item $\Omega^1$ or $\mathbb{F}_2 \oplus \mathbb{F}_2$ in the case where $\dim(\mathcal{S})=1$, with the former occuring precisely when $(a_1,a_1),(a_1,a_2),$ and $(a_2,a_2)$ satisfy one of the following: $(a_1,a_2)=(a_2,a_2)=0$; or $(a_1,a_1)=(a_1,a_2)=0$; or $(a_1,a_1)+(a_1,a_2)=(a_1,a_2)+(a_2,a_2)=0$;
\item $\Omega^{-1} \oplus \Omega^{-1}$ or $\Omega^{-2}$ in the case where $\dim(\mathcal{S})=0$, with the former occuring when $\varepsilon=1$ and the latter when $\varepsilon = 0$.
\end{itemize}
\end{theorem}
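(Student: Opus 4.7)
The plan is to combine Proposition \ref{prop:CMSS.main} and Theorem \ref{th:main.unexceptional} with an independent computation of $\dim_{\mathbb{F}_2} J(K)$ to pin down $\dim X$. In each of the four cases, the candidate modules for $X$ have at most two possible dimensions---the ambiguities being $\mathbb{F}_2 \oplus \mathbb{F}_2$ (dimension $2$) versus $\Omega^1$ (dimension $3$) when $\dim \mathcal{S} = 1$, and $\Omega^{-2}$ (dimension $5$) versus $\Omega^{-1} \oplus \Omega^{-1}$ (dimension $6$) when $\dim \mathcal{S} = 0$---so once $\dim X$ is known, the isomorphism type of $X$ is forced.

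For the independent calculation of $\dim J(K)$, I would apply the standard Kummer-theoretic exact sequence to the tower $F \subseteq K_i \subseteq K$. At each quadratic step $L = F(\sqrt{a})$ one has an exact sequence connecting $J(F)$, $J(L)^{\Gal(L/F)}$, and $\ker(\Br(F)[2] \to \Br(L)[2])$, with the last term controlled by cup product with $[a]$. Iterating through both intermediate fields $K_1$ and $K_2$ produces a formula for $\dim J(K)$ in terms of $\dim J(F)$, $\dim J(K_i)$, $\dim A$, $\dim \mathcal{S}$, and the dimensions of $B, C, D, V, W_B, W_C, W_D$ appearing in Theorem \ref{th:main.unexceptional}. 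Subtracting the unexceptional contribution $4\,\rk(Y_0) + 2(\rk(Y_1) + \rk(Y_2) + \rk(Y_3)) + \rk(Y_4) + 3\,\rk(Z_1) + 5\,\rk(Z_2)$ then isolates $\dim X$. In the cases $\dim \mathcal{S} \in \{3,2\}$ this gives $0$ or $1$, matching $\{0\}$ and $\mathbb{F}_2$ uniquely. In the case $\dim \mathcal{S} = 0$, the parameter $\varepsilon$ (which also shifts $\rk(Y_4)$) controls the additional dimension: when $\varepsilon = 1$, a class of $[N_{K/K_1}(K^\times)] \cap [N_{K/K_2}(K^\times)]$ outside $[F^\times]$ generates an extra $\Omega^{-1}$ summand to give $X \simeq \Omega^{-1} \oplus \Omega^{-1}$; otherwise $X \simeq \Omega^{-2}$.

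The main obstacle will be the $\dim \mathcal{S} = 1$ case, where one must produce a formula for $\dim X$ sharp enough to distinguish $2$ from $3$, and then verify that the three sub-conditions listed in the theorem are exactly those yielding the non-split $\Omega^1$. Using the defining presentation $\Omega^1 = \langle a,b,c \mid a^{1+\sigma_1} = b = c^{1+\sigma_2}, a^{1+\sigma_2} = c^{1+\sigma_1} = 1\rangle$, realizing $\Omega^1$ as a summand of $J(K)$ reduces to finding a $\sigma_2$-fixed class $[\alpha]$ and a $\sigma_1$-fixed class $[\gamma]$ in $J(K)$ whose $(1+\sigma_1)$- and $(1+\sigma_2)$-images agree nontrivially in $[F^\times]$. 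A Kummer-theoretic analysis shows this compatibility is solvable precisely when the nonzero generator of $\mathcal{S}$ is one of $(a_1,a_1)$, $(a_2,a_2)$, or $(a_1,a_1)+(a_2,a_2)$---exactly the three listed conditions. In the remaining four $1$-dimensional configurations for $\mathcal{S}$, a nontrivial obstruction appears and $X$ splits as $\mathbb{F}_2 \oplus \mathbb{F}_2$. Verifying this correspondence between the linear relations in $\mathcal{S}$ and solvability of the $\Omega^1$-compatibility equation is the principal technical step.
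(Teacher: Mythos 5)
Your overall strategy does not match how this theorem can actually be proved, and it contains a structural flaw: except for very special fields, $J(K)$ is infinite-dimensional and the ranks appearing in Theorem \ref{th:main.unexceptional} are infinite cardinals, so there is no meaningful way to ``compute $\dim J(K)$'' by Kummer-theoretic exact sequences and then ``subtract the unexceptional contribution $4\,\rk(Y_0)+2(\rk(Y_1)+\rk(Y_2)+\rk(Y_3))+\rk(Y_4)+3\,\rk(Z_1)+5\,\rk(Z_2)$'' to isolate the finitely many dimensions of $X$. The paper never performs such a count. Instead it uses the map $T:J(K)^G \to \bigoplus_{i=1}^3 (K_i^\times \cap K^{\times 2})/K_i^{\times 2}$ from \cite{CMSS}, whose image determines the isomorphism type of $X$ ($\{0\}$, $\mathbb{F}_2$, coordinate plane vs.\ non-coordinate plane in dimension $2$, and the $\Omega^{-2}$ versus $\Omega^{-1}\oplus\Omega^{-1}$ dichotomy in dimension $3$). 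The real content is Proposition \ref{prop:T.and.V}, which translates membership of each of the seven nontrivial candidate elements of $\im(T)$ into the vanishing of an explicit combination of $(a_1,a_1),(a_1,a_2),(a_2,a_2)$, using Lemma \ref{le:group.elements.and.T} together with the embedding-problem criterion of Proposition \ref{prop:Frolich}. Since $\dim \im(T) = 3-\dim(\mathcal{S})$ and the specific relations identify exactly when $\im(T)$ is a coordinate plane, Theorem \ref{th:main.exceptional} follows; your proposal contains no substitute for this dictionary.

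Two of your case analyses are also incorrect as stated. In the $\dim(\mathcal{S})=1$ case, saying ``the nonzero generator of $\mathcal{S}$ is $(a_1,a_1)$'' means nothing more than $(a_1,a_1)\neq 0$, so your disjunction (generator is $(a_1,a_1)$, $(a_2,a_2)$, or $(a_1,a_1)+(a_2,a_2)$) is equivalent to ``not both $(a_1,a_1)=0$ and $(a_2,a_2)=0$''; this is strictly weaker than the theorem's three conditions (for example $(a_1,a_1)=(a_1,a_2)\neq 0$, $(a_2,a_2)=0$ satisfies your condition but the theorem puts it in the $\mathbb{F}_2\oplus\mathbb{F}_2$ case), and in the theorem's third condition all three classes are equal, so $(a_1,a_1)+(a_2,a_2)=0$ and cannot be the generator at all. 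The asserted ``Kummer-theoretic analysis'' establishing your ``precisely when'' is exactly the hard step and is neither supplied nor consistent with the statement being proved. In the $\dim(\mathcal{S})=0$ case you have the roles of $\varepsilon$ reversed: $\varepsilon=1$ means $[N_{K/K_1}(K^\times)]\cap[N_{K/K_2}(K^\times)]\subseteq[F^\times]$, so there is no class of that intersection outside $[F^\times]$; it is the existence of such a class (equivalently a class with nontrivial $T$-image, since $T$ kills $[F^\times]$) that forces $X\simeq\Omega^{-2}$, i.e.\ the $\varepsilon=0$ alternative, not $\Omega^{-1}\oplus\Omega^{-1}$.
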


These two results are siblings of each other, born from the marriage that exists between the vanishing of certain Brauer group elements and the solvability of associated Galois embedding problems.  Indeed, we will see that the following result plays the critical role in the proofs of our theorems.

\begin{proposition}[cf. {\cite[(7.6)]{Frolich}}, {\cite[Cor.~2.6]{Ledet}},{\cite[Th.~1.2]{MinacSmith}}]\label{prop:Frolich}
Suppose that $E$ is a field with $\text{\rm char}(E) ]\neq 2$, and that $L/E$ is a Galois extension with $G=\Gal(L/E) = \langle \sigma_1,\cdots,\sigma_n\rangle \simeq \left(\mathbb{Z}/2\mathbb{Z}\right)^{\oplus n}$.  Suppose further that $L=E(\sqrt{a_1},\cdots,\sqrt{a_n})$, with $\sigma_j(\sqrt{a_i})=(-1)^{\delta_{ij}}\sqrt{a_i}$.  Let $c_{ij} \in \{0,1\}$ with $1 \leq i \leq j \leq n$ be given such that $$\widetilde{G} = \left\langle \omega,\widetilde{\sigma_1},\cdots,\widetilde{\sigma_n} \Big{|} [\omega,\widetilde{\sigma_i}]=\text{\rm id}, \widetilde{\sigma_i}^2=\omega^{c_{ii}}, [\widetilde{\sigma_i},\widetilde{\sigma_j}]=\omega^{c_{ij}}\right\rangle$$ is a non-split, central group extension of $G$ by $\langle \omega \rangle \simeq \mathbb{Z}/2\mathbb{Z}$.  Then $\prod_{i \leq j} (a_i,a_j)^{c_{ij}}$ is trivial in $\Br(E)$ if and only if the Galois-theoretic embedding problem corresponding to $\widetilde{G} \twoheadrightarrow G$ is solvable over $L/E$.
\end{proposition}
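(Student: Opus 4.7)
The plan is to interpret both sides cohomologically: the central extension $\widetilde{G}\twoheadrightarrow G$ corresponds to a class $[\epsilon]\in H^2(G,\mathbb{Z}/2\mathbb{Z})$, and the embedding problem over $L/E$ is solvable if and only if the inflation of $[\epsilon]$ along $G_E\twoheadrightarrow G$ (where $G_E$ is the absolute Galois group of $E$) vanishes in $H^2(G_E,\mathbb{Z}/2\mathbb{Z})$. This is standard obstruction theory for central embedding problems: a continuous surjection $G_E\to G$ lifts through a central extension with kernel $\mathbb{Z}/2\mathbb{Z}$ exactly when the pullback extension of $G_E$ splits, which by definition is the inflation of $[\epsilon]$.

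Next I would identify $H^2(G_E,\mathbb{Z}/2\mathbb{Z})$ with the $2$-torsion of $\Br(E)$. The Kummer sequence $1\to\mu_2\to\overline{E}^\times\xrightarrow{2}\overline{E}^\times\to 1$ together with Hilbert 90 ($H^1(G_E,\overline{E}^\times)=0$) furnishes an injection $H^2(G_E,\mathbb{Z}/2\mathbb{Z})\hookrightarrow H^2(G_E,\overline{E}^\times)=\Br(E)$; moreover, Kummer theory identifies the generator $\chi_i\in H^1(G,\mathbb{Z}/2\mathbb{Z})$ dual to $\sigma_i$ with the class $[a_i]\in E^\times/E^{\times 2}\simeq H^1(G_E,\mathbb{Z}/2\mathbb{Z})$ upon inflation, and under the connecting map the cup product $\chi_i\cup\chi_j$ corresponds precisely to the Brauer class $(a_i,a_j)$ of the quaternion algebra (this is the well-known symbol/cup-product compatibility). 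With $\chi_i\cup\chi_i$ mapping to $(a_i,a_i)$ likewise, we see that under the composition $H^2(G,\mathbb{Z}/2\mathbb{Z})\xrightarrow{\mathrm{inf}}H^2(G_E,\mathbb{Z}/2\mathbb{Z})\hookrightarrow\Br(E)$, the class $\sum_{i\le j}c_{ij}\,\chi_i\cup\chi_j$ goes to $\sum_{i\le j}c_{ij}(a_i,a_j)$, which in multiplicative notation is $\prod_{i\le j}(a_i,a_j)^{c_{ij}}$.

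It therefore remains to match the presentation of $\widetilde{G}$ to the class $[\epsilon]=\sum_{i\le j}c_{ij}\,\chi_i\cup\chi_j\in H^2(G,\mathbb{Z}/2\mathbb{Z})$. For this I would pick a set-theoretic section $s:G\to\widetilde{G}$ by $s(\sigma_1^{e_1}\cdots\sigma_n^{e_n})=\widetilde{\sigma_1}^{e_1}\cdots\widetilde{\sigma_n}^{e_n}$ with $e_i\in\{0,1\}$ and read off the resulting $2$-cocycle $\epsilon(g,h)=s(g)s(h)s(gh)^{-1}\in\langle\omega\rangle$ from the commutator relations $[\widetilde{\sigma_i},\widetilde{\sigma_j}]=\omega^{c_{ij}}$ and square relations $\widetilde{\sigma_i}^2=\omega^{c_{ii}}$. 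Using the standard cocycle model of cup products in $H^*(G,\mathbb{F}_2)$ for $G=(\mathbb{Z}/2\mathbb{Z})^n$, the normalization $\chi_i(\sigma_1^{e_1}\cdots\sigma_n^{e_n})=e_i$ gives $(\chi_i\cup\chi_i)(g,h)$ detecting a carry in the $i$-th coordinate (hence captures $\widetilde{\sigma_i}^2$) and $(\chi_i\cup\chi_j)(g,h)$ for $i<j$ detecting the reordering $\widetilde{\sigma_j}\widetilde{\sigma_i}\mapsto\widetilde{\sigma_i}\widetilde{\sigma_j}$ (capturing $[\widetilde{\sigma_i},\widetilde{\sigma_j}]$); combining these, $\epsilon$ is cohomologous to $\sum_{i\le j}c_{ij}\,\chi_i\cup\chi_j$.

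The main obstacle is this last cocycle identification, which is the only genuinely computational step; all the rest is formal and relies on known cohomological packaging (Hilbert 90, the cup-product/symbol identification, and the obstruction principle for central embedding problems). Once the cocycle matches, the equivalence $\prod_{i\le j}(a_i,a_j)^{c_{ij}}=0$ in $\Br(E)$ iff $\mathrm{inf}[\epsilon]=0$ iff the embedding problem is solvable follows immediately, and the characteristic hypothesis $\mathrm{char}(E)\neq 2$ is used exactly to keep $\mu_2$ \'etale so that Kummer theory and the Brauer identification are valid.
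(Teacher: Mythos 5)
The paper does not actually prove this proposition: it is quoted as a known result, with pointers to \cite{Frolich}, \cite{Ledet}, and \cite{MinacSmith}, and is then used as a black box in the proofs of Proposition \ref{prop:solving.equations.with.Brauer} and Proposition \ref{prop:T.and.V}. So there is no in-paper argument to compare against; what you have written is essentially the standard proof that underlies those references: identify the central extension $\widetilde{G}\twoheadrightarrow G$ with a class $\epsilon\in H^2(G,\mathbb{Z}/2\mathbb{Z})$, observe that the obstruction to the embedding problem is the inflation of $\epsilon$ to $H^2(G_E,\mathbb{Z}/2\mathbb{Z})$, embed the latter into $\Br(E)$ via Kummer theory and Hilbert 90 (here $\mathrm{char}(E)\neq 2$ is used, as you say), use the cup-product/quaternion-symbol compatibility, and finally check by an explicit cocycle computation with the section $s(\sigma_1^{e_1}\cdots\sigma_n^{e_n})=\widetilde{\sigma_1}^{e_1}\cdots\widetilde{\sigma_n}^{e_n}$ that $\epsilon$ is cohomologous to $\sum_{i\le j}c_{ij}\,\chi_i\cup\chi_j$ (the reordering terms give $\chi_j\cup\chi_i$ for $i<j$, which is harmless mod $2$). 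That computation is correct in outline and is exactly the content of the cited results.

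One point you should make explicit, because it is where the non-splitness hypothesis does real work: vanishing of the inflated obstruction gives only a homomorphism $G_E\to\widetilde{G}$ lifting the fixed surjection $G_E\twoheadrightarrow G$, i.e.\ a solution of the weak embedding problem, whereas the proposition asserts solvability in the proper sense (a Galois extension $\widetilde{L}/E$ containing $L$ with group $\widetilde{G}$ and the right restriction map). Since the kernel $\langle\omega\rangle$ has order $2$, a non-surjective lift would have image a complement to $\langle\omega\rangle$ mapping isomorphically onto $G$, contradicting that $\widetilde{G}$ is a non-split extension; hence every lift is surjective and corresponds, via Galois theory, to an honest solution field. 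With that remark added (and the routine verification that the inflation of $\chi_i$ is the Kummer character of $a_i$, which follows from $\sigma_j(\sqrt{a_i})=(-1)^{\delta_{ij}}\sqrt{a_i}$), your argument is complete and matches the standard treatment in \cite{Ledet} and \cite{MinacSmith}.
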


\subsection{Outline of paper}

In section \ref{sec:ranks.of.nonexceptional.summands} we will review how the summands $Y_0,Y_1,Y_2,Y_3,Y_4,Z_1,$ and $Z_2$ from Proposition \ref{prop:CMSS.main} are determined by the solvability of certain simultaneous equations in \cite{CMSS}.  Then we will connect the solvability of these equations to the subspaces from Equation \ref{eq:Brauer.subspaces}. This will give the proof of the Theorem \ref{th:main.unexceptional}. In section \ref{sec:exceptional.summand} we will review how the construction of $X$ from \cite{CMSS} hinges on the image of a particular function $T:J(K)^G \to \bigoplus_{i=1}^3 (K_i^\times \cap K^{\times 2})/K_i^{\times 2}$.  We will then see how the image of that function can be encoded in the vanishing of elements from $\mathcal{S}$; this will give the proof Theorem \ref{th:main.exceptional}.  In the final section, we will consider several explicit examples.  Along the way, we will be able to show that each of the unexceptional summand types from Proposition \ref{prop:CMSS.main} are realizable.

\subsection{Acknowledgements}

The authors gratefully acknowledge an anonymous referee's insightful and encouraging report which included helpful suggestions for the improvement of the exposition of this paper.

\section{Multiplicities of unexceptional summands}\label{sec:ranks.of.nonexceptional.summands}

In \cite{CMSS}, the decomposition provided by Proposition \ref{prop:CMSS.main} begins by constructing certain subspaces of $[F^\times]$ and uses them to produce modules that ultimately become $Y_0,Y_1,Y_2,Y_3,Y_4, Z_1$, and $Z_2$.  We will exhibit these subspaces by using diagrams to depict various equations; in these drawings, if we draw an arrow between two elements, then this indicates that the operator acts on the source of the arrow to produce the terminus of the arrow. (To help cut down on the size of diagrams, we will adopt the convention that an element which has no arrows emanating from it is to be considered fixed under the action of both $\sigma_1$ and $\sigma_2$.)   A diagram that includes multiple arrows is then a system of simultaneous equations.  For example, 
$$
\begin{tikzpicture}[scale=.65]
\node (A3) at (0,2) {$[\alpha]$};
\node (A1) at (-2,0) {$[\beta]$};
\node (A2) at (2,0) {$[\gamma]$};

\draw[->] (A3) to node[sloped,above] {\tiny $1+\sigma_2$}(A1);
\draw[->] (A3) to node[sloped,above] {\tiny $1+\sigma_1$}(A2);
\end{tikzpicture}
$$ encodes the simultaneous equations $[\alpha]^{1+\sigma_2} = [\beta]$ and $[\alpha]^{1+\sigma_1} = [\gamma]$, where $[\beta]$ and $[\gamma]$ are both fixed elements.   The relevant subspaces are denoted $\mathscr{A},\mathscr{B},\mathscr{C},\mathscr{D},\mathscr{V},\mathscr{W}_B,$ and $\mathscr{W}_C$, and are shown in Figure \ref{fig:first.filtration.diagrams}.  

\begin{figure}[!ht]
\begin{tikzpicture}[scale=.65]
\node (A3) at (0,2) {$[k]$};
\node (A1) at (-2,0) {$[k]^{1+\sigma_2}$};
\node (A2) at (2,0) {$[k]^{1+\sigma_1}$};
\node (A0) at (0,-2) {$[f]$};
\node at (0,-4) {Diagram for $\mathscr{A}$};

\draw[->] (A3) to node[sloped,above] {\tiny $1+\sigma_2$}(A1);
\draw[->] (A3) to node[sloped,above] {\tiny $1+\sigma_1$}(A2);
\draw[->] (A1) to node[sloped,below] {\tiny $1+\sigma_1$}(A0);
\draw[->] (A2) to node[sloped,below] {\tiny $1+\sigma_2$}(A0);

\draw[dashed] (5,2) -- (5,-5);

\node (B1) at (10,0) {$[\gamma_1]$};
\node (B2) at (14,0) {$[\gamma_2]$};
\node (B0) at (12,-2) {$[f]$};
\node (Bt1) at (8,-2) {$[1]$};
\node (Bt2) at (16,-2) {$[1]$};
\node at (12,-4) {Diagram for $\mathscr{V}$};

\draw[->] (B1) to node[sloped,above] {\tiny $1+\sigma_1$}(B0);
\draw[->] (B2) to node[sloped,above] {\tiny $1+\sigma_2$}(B0);
\draw[->] (B1) to node[sloped,above] {\tiny $1+\sigma_2$}(Bt1);
\draw[->] (B2) to node[sloped,above] {\tiny $1+\sigma_1$}(Bt2);

\draw[dashed] (-4,-5) -- (20,-5);

\node (C1) at (0,-6) {$[\gamma]$};
\node (Ct) at (-2,-8) {$[1]$};
\node (C2) at (2,-8) {$[f]$};
\node at (0,-10) {Diagram for $\mathscr{B}$};

\draw[->] (C1) to node[sloped,above] {\tiny $1+\sigma_1$}(C2);
\draw[->] (C1) to node[sloped,above] {\tiny $1+\sigma_2$}(Ct);

\draw[dashed] (4,-5) -- (4,-11);

\node (D1) at (8,-6) {$[\gamma]$};
\node (D2) at (6,-8) {$[f]$};
\node (Dt) at (10,-8) {$[1]$};
\node at (8,-10) {Diagram for $\mathscr{C}$};

\draw[->] (D1) to node[sloped,above] {\tiny $1+\sigma_2$}(D2);
\draw[->] (D1) to node[sloped,above] {\tiny $1+\sigma_1$}(Dt);

\draw[dashed] (12,-5) -- (12,-11);

\node (E1) at (16,-6) {$[\gamma]$};
\node (E2) at (14,-8) {$[f]$};
\node (E3) at (18,-8) {$[f]$};
\node at (16,-10) {Diagram for $\mathscr{D}$};

\draw (E1) to node[sloped,above] {\tiny $1+\sigma_2$}(E2);
\draw (E1) to node[sloped,above] {\tiny $1+\sigma_1$}(E3);

\draw[dashed] (-4,-11) -- (20,-11);

\node (W1) at (4,-12) {$[\gamma_1]$};
\node (W2) at (8,-12) {$[\gamma_2]$};
\node (W3) at (12,-12) {$[\gamma_3]$};
\node (Wb0) at (2,-14) {$[1]$};
\node (Wb1) at (6,-14) {$[f]$};
\node (Wb2) at (10,-14) {$[g]$};
\node (Wb3) at (14,-14) {$[1]$};
\node at (8,-16) {Diagram for $\mathscr{W}_B$ and $\mathscr{W}_C$};

\draw[->] (W1) to node[sloped,above] {\tiny $1+\sigma_2$}(Wb0);
\draw[->] (W2) to node[sloped,above] {\tiny $1+\sigma_2$}(Wb1);
\draw[->] (W3) to node[sloped,above] {\tiny $1+\sigma_2$}(Wb2);
\draw[->] (W1) to node[sloped,above] {\tiny $1+\sigma_1$}(Wb1);
\draw[->] (W2) to node[sloped,above] {\tiny $1+\sigma_1$}(Wb2);
\draw[->] (W3) to node[sloped,above] {\tiny $1+\sigma_1$}(Wb3);

\end{tikzpicture}
\caption{Diagrams that represent the various systems of equations that are solvable in order for $[f]$ to be an element of the subspaces $\mathscr{A},\mathscr{B},\mathscr{C},\mathscr{D},$ and $\mathscr{V}$.  For $[f]$ to be in $\mathscr{W}_B$, there must exist some $[g]$ which satisfies the last diagram; symmetrically, for $[g]$ to be in $\mathscr{W}_C$, there must be some $[f]$ which satisfies the last diagram.}
\label{fig:first.filtration.diagrams}
\end{figure}
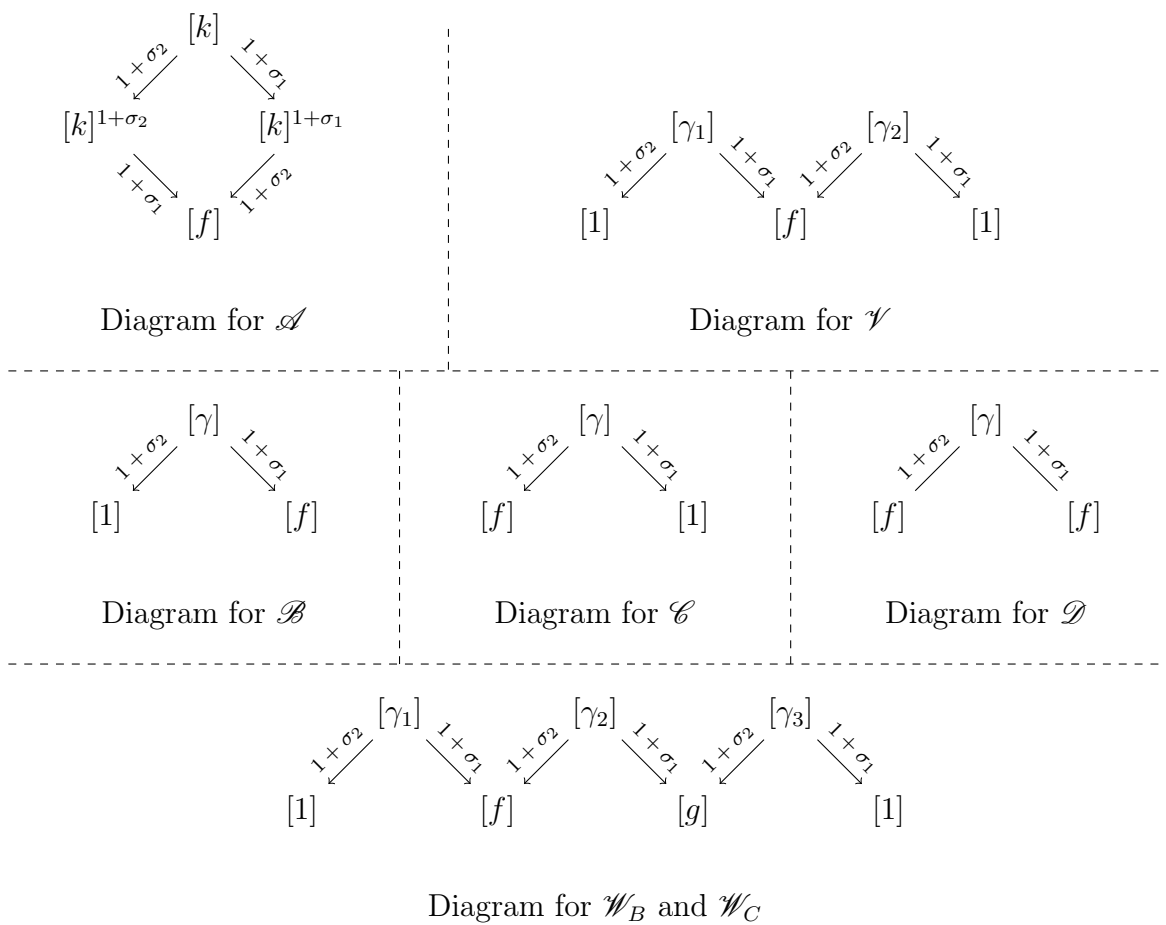		

We now recall the essential details of the construction of the unexceptional modules from \cite{CMSS}.  Readers who are interested in more details can consult  \cite[Th.~2]{CMSS} and the first two paragraphs of the proof of \cite[Th.~1]{CMSS}.

The module $Y_0$ is constructed by selecting a basis for $\mathscr{A}$, and for each basis element $[x]$ constructing a free module $M_x = \langle [k_x] \rangle$, where here $[k_x] \in [K^\times]$ satisfies $[N_{K/F}(k_x)]=[x]$.  One then defines $Y_0 = \bigoplus_{[x]} M_x$.  From this it is clear that $\rk(Y_0) = \dim(\mathscr{A})$.

The module $Z_1$ is constructed by selecting a basis for a complement to $\mathscr{A}$ within $\mathscr{V}$, and for each basis element $[x]$ constructing a module $M_x = \langle [\gamma_{1,x}],[\gamma_{2,x}]\rangle$ which satisfies the diagram for $\mathscr{V}$ (with $[\gamma_{i,x}]$ playing the role of $[\gamma_i]$ in the diagram).  Each $M_x$ is then isomorphic to $\Omega^{1}$, and then $Z_1 = \bigoplus_{[x]} M_x$.  From this one sees that $\rk(Z_1) = \codim(\mathscr{A} \hookrightarrow \mathscr{V})$.

The module $Z_2$ is constructed by first selecting complements to $\mathscr{V}$ within $\mathscr{W}_B$ and $\mathscr{W}_C$; call these complements $B_0$ and $C_0$.  One proves (see \cite[Lem.~3.2]{CMSS}) that there exists an isomorphism $\phi:B_0 \to C_0$ so that for each $[b] \in B_0$, the element $\phi([b]) \in C_0$ is the unique element $[c] \in C_0$ so that the diagram for $\mathscr{W}_B$ and $\mathscr{W}_C$ is solvable with $[f] = [b]$ and $[g] = [c]$.  (In particular, this gives $\codim(\mathscr{V} \hookrightarrow \mathscr{W}_B) = \codim(\mathscr{V} \hookrightarrow \mathscr{W}_C)$.)  To construct $Z_2$, one selects a basis for $B_0$, and then for each $[x]$ in the basis one constructs a module $M_x = \langle [\gamma_{1,x}],[\gamma_{2,x}],[\gamma_{3,x}]\rangle$, where the elements $[\gamma_{1,x}],[\gamma_{2,x}],[\gamma_{3,x}]$ are selected so that they solve the diagram for $\mathscr{W}_B$ and $\mathscr{W}_C$ with $[f] = [x]$ and $[g] = \phi([x])$ (and, of course, $[\gamma_{i,x}] = [\gamma_i]$).  Then $M_x \simeq \Omega^2$, and one defines $Z_2 = \bigoplus_{[x]} M_x$.  Hence we have $\rk(Z_2) = \codim(\mathscr{V} \hookrightarrow \mathscr{W}_B) = \codim(\mathscr{V} \hookrightarrow \mathscr{W}_C)$.

The constructions of $Y_1$, $Y_2$, and $Y_3$ are all produced in a way analogous to each other.  For instance, to construct $Y_1$ one selects a basis for a complement of $\mathscr{W}_B$ within $\mathscr{B}$.  For each basis element $[x]$, one then produces a module $M_x = \langle [\gamma_x] \rangle$ which satisfies the diagram for $\mathscr{B}$ (again, with $[\gamma_x]$ in place of $[\gamma]$ and $[x]$ in place of $[f]$).  Each such module is isomorphic to $\mathbb{F}_p[\overline{G_1}]$, and then one defines $Y_1 = \bigoplus_{[x]} M_x$.  Hence $\rk(Y_1) = \codim(\mathscr{W}_B \hookrightarrow \mathscr{B})$. (The construction for $Y_2$ follows this same script after replacing $\mathscr{W}_B$ with $\mathscr{W}_C$, $\mathscr{B}$ with $\mathscr{C}$, and $\overline{G_1}$ with $\overline{G_2}$. For the construction of $Y_3$, one replaces $\mathscr{W}_B$ with $(\mathscr{B} +\mathscr{C}) \cap \mathscr{D}$, replaces $\mathscr{B}$ with $\mathscr{D}$, and replaces $\overline{G_1}$ with $\overline{G_3}$.  For this last result, \cite[Lem.~3.1]{CMSS} shows that $[b][c] \in (\mathscr{B}+\mathscr{C}) \cap \mathscr{D}$ if and only if the last diagram in Figure \ref{fig:first.filtration.diagrams} is solvable for $[f]=[b]$ and $[g]=[c]$.)


Finally, to construct $Y_4$, one first must determine whether $[N_{K/K_1}(K^\times)] \cap [N_{K/K_2}(K^\times)] \subseteq [F^\times]$ or not.  If it is, then $Y_4$ is simply a complement to $\mathscr{C}+\mathscr{D}+\mathscr{E}$ within $[F^\times]$, and so $\rk(Y_4) = \codim(\mathscr{C}+\mathscr{D}+\mathscr{E} \hookrightarrow [F^\times])$.  Otherwise $Y_4$ is constructed by identifying a particular $1$-dimensional subspace $\mathscr{X}$ of $[F^\times]$ related to the exceptional summand $X$ that sits outside of $\mathscr{C}+\mathscr{D}+\mathscr{E}$, and then letting $Y_4$ be a complement to $\mathscr{C}+\mathscr{D}+\mathscr{E}+\mathscr{X}$ within $[F^\times]$.  Hence in this latter case, we have $\rk(Y_4) = \codim(\mathscr{C}+\mathscr{D}+\mathscr{E}\hookrightarrow [F^\times])-1$.

With these details in mind, the proof of Theorem 1 boils down to equating the subspaces $\mathscr{B},\mathscr{B},\mathscr{C},\mathscr{D},\mathscr{V},\mathscr{W}_B,\mathscr{W}_C$, and $(\mathscr{B}+\mathscr{C})\cap \mathscr{D}$ with the subspaces $A,B,C,D,V,W_B,W_C,$ and $W_D$ from \ref{eq:Brauer.subspaces}.  Each of these desired equalities follows from the following

\begin{proposition}\label{prop:solving.equations.with.Brauer}
Suppose that $f,g \in F^\times$ are given.  Then there exists $\gamma \in K^\times$ so that 
\begin{equation}\label{eq:solvable.diagram}
\begin{tikzpicture}[scale=.65]
\node (A3) at (0,2) {$[\gamma]$};
\node (A1) at (-2,0) {$[g]$};
\node (A2) at (2,0) {$[f]$};

\draw[->] (A3) to node[sloped,above] {\tiny $1+\sigma_2$}(A1);
\draw[->] (A3) to node[sloped,above] {\tiny $1+\sigma_1$}(A2);
\end{tikzpicture}
\end{equation} if and only if $(a_1,f)+(a_2,g) \in \mathcal{S}$.
\end{proposition}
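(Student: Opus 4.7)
The plan is to establish both directions by exploiting the observation that $a_1, a_2 \in F^\times$ become squares in $K$, hence are trivial in $J(K)$. Consequently, replacing $f$ by $f \cdot a_1^i a_2^j$ (with corresponding modifications for $g$) leaves the diagram equations unchanged in $J(K)$ while shifting $(a_1,f)+(a_2,g)$ by an arbitrary element of $\mathcal{S} = \langle (a_1,a_1),(a_1,a_2),(a_2,a_2)\rangle_{\mathbb{F}_2}$. For the backward direction this reduces matters to the case $(a_1,f)+(a_2,g) = 0$ in $\Br(F)$; the forward direction ultimately requires showing the symbol lies in $\mathcal{S}$, and the same flexibility will be used at intermediate steps.

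For the backward direction with $(a_1,f)+(a_2,g)=0$, we invoke the common slot lemma for split biquaternion algebras to obtain $r \in F^\times$ satisfying $(a_1,f) = (a_1,r)$ and $(a_2,g) = (a_2,r)$; the compatibility $(a_1 a_2, r)=0$ then forces $r \in N_{K_3/F}(K_3^\times)$. This yields factorizations $r = N_{K_3/F}(\rho)$, $f/r = N_{K_1/F}(\mu)$, and $g/r = N_{K_2/F}(\nu)$ for suitable $\rho \in K_3^\times$, $\mu \in K_1^\times$, $\nu \in K_2^\times$. Setting $\gamma := \rho\mu\nu \in K^\times$, a direct computation using the fact that each $\sigma_i$ acts as the nontrivial involution on two of $K_1, K_2, K_3$ and fixes the third confirms $\gamma\sigma_1(\gamma) = f\nu^2$ and $\gamma\sigma_2(\gamma) = g\mu^2$, so that $\gamma$ solves the diagram.

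For the forward direction, from $\gamma\sigma_1(\gamma) = fm_1^2 \in K_2^\times$ we decompose $m_1 \in K^\times$ using $K = K_2 \oplus \sqrt{a_1} K_2$. The condition $m_1^2 \in K_2^\times$ forces $m_1 \in K_2^\times \cup \sqrt{a_1}K_2^\times$, which yields $f \cdot a_1^\epsilon \in N_{K/K_2}(K^\times)$ for some $\epsilon \in \{0,1\}$. Via the standard description of $\ker(\Br(F) \to \Br(K_2))$ as the subgroup of symbols $(a_2,\cdot)$, this translates to $(a_1,f) \equiv (a_2,y) \pmod{\mathcal{S}}$ for some $y \in F^\times$; symmetrically, $(a_2,g) \equiv (a_1,z) \pmod{\mathcal{S}}$. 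Finally we combine these with a third norm analysis arising from the derived relation $[\gamma]^{1+\sigma_1\sigma_2} = [fg]$ (an immediate consequence of the original two equations) applied through $K_3$, in order to pin down the relationship between $y$ and $z$ tightly enough to force $(a_1,f)+(a_2,g) \in \mathcal{S}$.

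The main obstacle is this final synthesis in the forward direction: each individual norm condition only determines $(a_i,h)$ modulo the kernel of restriction to a certain quadratic subfield, and combining the three conditions cleanly amounts to running the common slot construction in reverse, extracting a common $r$ from the factorization structure of $\gamma$ itself rather than receiving it from the common slot lemma.
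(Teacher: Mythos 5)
Your backward direction is correct and takes a genuinely different, more elementary route than the paper. The reduction modulo $\mathcal{S}$ (using $a_1,a_2\in K^{\times 2}$) is valid; the common slot theorem applies to $(a_1,f)\cong(a_2,g)$ whether or not these algebras are division; and since $\sigma_1$ fixes $K_2$ pointwise and acts as the nontrivial involution on $K_1$ and $K_3$ (and symmetrically for $\sigma_2$), the verification that $\gamma=\rho\mu\nu$ satisfies $\gamma^{1+\sigma_1}=f\nu^2$ and $\gamma^{1+\sigma_2}=g\mu^2$ is immediate. The paper instead solves a central embedding problem over $F(\sqrt{a_1},\sqrt{a_2},\sqrt{f},\sqrt{g})$ via Proposition \ref{prop:Frolich} and extracts $\gamma$ from the fixed field of $\langle\widetilde{\sigma_f},\widetilde{\sigma_g}\rangle$; your construction avoids the embedding-problem machinery and the case analysis on $\dim\langle[f],[g]\rangle$.

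The forward direction, however, has a genuine gap, exactly at what you call the final synthesis, and it cannot be repaired from the three conditions you extract. Those conditions --- (i) $(a_1,f)+(a_2,y)\in\mathcal{S}$ for some $y$, (ii) $(a_2,g)+(a_1,z)\in\mathcal{S}$ for some $z$, (iii) $(a_1,fg)+(a_1a_2,w)\in\mathcal{S}$ for some $w$ --- are each consequences of the solvability of one of the equations $[\gamma]^{1+\sigma_1}=[f]$, $[\gamma]^{1+\sigma_2}=[g]$, $[\gamma]^{1+\sigma_1\sigma_2}=[fg]$ taken separately (for possibly different $\gamma$), and they do not imply the conclusion. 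Concretely, in the setting of Example \ref{ex:second} ($a_1=33$, $a_2=35$, where $(33,3pq)=(33,33)$, $(35,7pq)=0$, and $(33,7pq)=(35,3pq)\notin\mathcal{S}$), take $f=7pq$ and $g=1$: then (i) holds with $y=3pq$ since $(33,7pq)+(35,3pq)=0$, (ii) holds with $z=1$, and (iii) holds with $w=3pq$ since $(33,7pq)+(33\cdot 35,3pq)=(33,7pq)+(35,3pq)+(33,33)=(33,33)\in\mathcal{S}$; yet $(a_1,f)+(a_2,g)=(33,7pq)\notin\mathcal{S}$. So no amount of pinning down the relationship between $y$, $z$, and $w$ from (i)--(iii) alone can force the conclusion; the simultaneity of the two equations for a single $\gamma$ carries information that your norm-by-norm analysis discards.

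The paper injects precisely that joint information by a different mechanism: it shows $\widetilde{L}=K(\sqrt{\gamma},\sqrt{f},\sqrt{g})$ is Galois over $F$, hence a central extension of $\left(\mathbb{Z}/2\mathbb{Z}\right)^{\oplus 4}$ by $\langle\omega\rangle$, computes the relations $\widetilde{\sigma_f}^2=\widetilde{\sigma_g}^2=\mathrm{id}$, $[\widetilde{\sigma_f},\widetilde{\sigma_1}]=[\widetilde{\sigma_g},\widetilde{\sigma_2}]=\omega$, $[\widetilde{\sigma_f},\widetilde{\sigma_2}]=[\widetilde{\sigma_g},\widetilde{\sigma_1}]=\mathrm{id}$ through the action of lifts on $\sqrt{\gamma}$, and then reads off $(a_1,f)+(a_2,g)+c_1(a_1,a_1)+c_2(a_1,a_2)+c_3(a_2,a_2)=0$ from Proposition \ref{prop:Frolich}. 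Those commutator computations are exactly the missing ingredient. If you want a purely norm-theoretic forward direction (your ``common slot in reverse''), you would need to produce a single $r\in F^\times$ from the one element $\gamma$, for instance by analyzing $\gamma$ against all three subfields $K_1,K_2,K_3$ simultaneously; as written, that step is absent and the forward implication does not go through.
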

  
\begin{proof}
The proof requires case analysis of $\dim_{\mathbb{F}_2} \langle [f],[g] \rangle$, but we will focus on the case where $[f]$ and $[g]$ are independent; other cases involve analogous arguments to the one detailed here.

Suppose first that $(a_1,f)+(a_2,g) \in \mathcal{S}$.  Hence there exist $c_1,c_2,c_3 \in \mathbb{F}_2$ so that $$(a_1,f)+(a_2,g)+c_1(a_1,a_1)+c_2(a_1,a_2)+c_3(a_2,a_2)=0.$$  Let $L = K(\sqrt{f},\sqrt{g}) = F(\sqrt{a_1},\sqrt{a_2},\sqrt{f},\sqrt{g})$.  By our assumption we have $\Gal(L/F) \simeq \left(\mathbb{Z}/2\mathbb{Z}\right)^{\oplus 4}$.  We abuse notation and continue to write $\sigma_1$ and $\sigma_2$ for the elements of $\Gal(L/F)$ that are dual to $a_1$ and $a_2$, and we also let $\sigma_f$ and $\sigma_g$ be dual to $f$ and $g$.  By Proposition \ref{prop:Frolich}, the Galois embedding problem that corresponds to the central, non-split group extension  
$$\widetilde{G} = \left\langle \omega,\widetilde{\sigma_1},\widetilde{\sigma_2},\widetilde{\sigma_f},\widetilde{\sigma_g} \Big{|} \widetilde{\sigma_1}^2 = \omega^{c_1}, \widetilde{\sigma_2}^2 = \omega^{c_3}, [\widetilde{\sigma_f},\widetilde{\sigma_1}] = [\widetilde{\sigma_g},\widetilde{\sigma_2}]=\omega, [\widetilde{\sigma_1},\widetilde{\sigma_2}] = \omega^{c_2}\right\rangle \twoheadrightarrow \left(\mathbb{Z}/2\mathbb{Z}\right)^{\oplus 4}$$
is solvable over $L/F$ (here all unstated relations are trivial).  We will let $\tilde L/F$ be a solution to this embedding problem.

Consider the subfield of $\tilde L/F$ which is fixed by $\langle \widetilde{\sigma_f},\widetilde{\sigma_g}\rangle$.  This is a degree $8$ extension which contains $K$, and hence can be viewed as a degree $2$ extension of $K$.  By Kummer theory we can write this field as $K(\sqrt{\gamma})$ for some $\gamma \in K$.  We will show that this $\gamma$ solves the simultaneous equations represented by Equation \ref{eq:solvable.diagram}.  The proof that $[\gamma]^{1+\sigma_2} = [g]$ is completely analogous to the proof that $[\gamma]^{1+\sigma_1} = [f]$, and so we will focus on the latter.  Our general strategy will be to first argue that $\gamma^{1+\sigma_1} \in L^{\times 2} \cap K_2^{\times}$, from which we deduce that $[\gamma]_{K_2}$ sits in a particular $3$-dimensional subspace of $J(K_2)$.  Some further calculations will allow us to identify $\gamma^{1+\sigma_1}$ as one of two possible elements of $J(K_2)$, at which point we will be able to deduce $[\gamma]^{1+\sigma_1} = [f]$.

Note first that $\omega$ fixes all of $K$, and hence permutes the roots of $X^2-\gamma=0$.  On the other hand, since $\sqrt{\gamma} \not\in L$ we have $\sqrt{\gamma}^{\omega} = - \sqrt{\gamma}$.  Since $\omega$ is central in $\widetilde{G}$, we therefore have $$\left(\sqrt{\gamma}\right)^{(1+\sigma_1)\omega} = \left(\sqrt{\gamma}^\omega\right)^{1+\sigma_1} = \left(- \sqrt{\gamma}\right)^{1+\sigma_1} = \sqrt{\gamma}^{1+\sigma_1}.$$ From this we have $\sqrt{\gamma}^{1+\sigma_1}$ is fixed by $\omega$, and hence $\sqrt{\gamma}^{1+\sigma_1} \in L^\times$.  Therefore $\gamma^{1+\sigma_1} \in L^{\times 2}$.  On the other hand, since $\widetilde{\sigma_1} = \sigma_1$ for elements of $L$, we have $$\left(\gamma^{1+\sigma_1}\right)^{\sigma_1} = \gamma^{\sigma_1+\sigma_1^2} = \gamma^{\sigma_1+1}.$$ Hence $\gamma^{1+\sigma_1}$ is in the subfield of $K$ fixed by $\sigma_1$, so that $\gamma^{1+\sigma_1} \in K_2$.  An application of Kummer theory then tells us that $[\gamma^{1+\sigma_1}]_{K_2} \in \frac{L^{\times 2} \cap K_2^\times}{K_2^{\times 2}} = \langle [a_1]_{K_2},[f]_{K_2},[g]_{K_2}\rangle_{\mathbb{F}_2}$.

To determine the value of $[\gamma^{1+\sigma_1}]_{K_2}$ more precisely, we will use the fact that $\widetilde{\sigma_f}\omega\widetilde{\sigma_1} = \widetilde{\sigma_1}\widetilde{\sigma_f}$, as well as the fact that $\widetilde{\sigma_f}$ permutes the roots of $X^2-\gamma$, and that $\sqrt{\gamma}^\omega = -\sqrt{\gamma}$.  With these facts in mind, we have 
$$\left(\sqrt{\gamma}\right)^{(1+\widetilde{\sigma_1})\widetilde{\sigma_f}} = \left(\sqrt{\gamma}^{\widetilde{\sigma_f}}\right)^{1+\omega\widetilde{\sigma_1}} =  \left(\pm \sqrt{\gamma}\right)^{1+\omega\widetilde{\sigma_1}} = 
\left(\pm \sqrt{\gamma}\right)\left(\mp\sqrt{\gamma}\right)^{\widetilde{\sigma_1}} = -\sqrt{\gamma}^{1+\widetilde{\sigma_1}}.$$  Since $\sqrt{\gamma}^{1+\widetilde{\sigma_1}}$ is not fixed by $\widetilde{\sigma_f}$, this forces $[\gamma^{1+\sigma_1}]_{K_2} \not\in \langle[a_1]_{K_2},[g]_{K_2}\rangle$. Hence $[\gamma^{1+\sigma_1}]_{K_2} \in [f]_{K_2}\langle [a_1]_{K_2},[g]_{K_2}\rangle$.

On the other hand, a similar argument (this time using the fact that $\widetilde{\sigma_g}\widetilde{\sigma_1} = \widetilde{\sigma_1}\widetilde{\sigma_g}$ and that $\sqrt{\gamma}^{\widetilde{\sigma_g}} = \pm \sqrt{\gamma}$) gives $$\left(\sqrt{\gamma}\right)^{(1+\widetilde{\sigma_1})\widetilde{\sigma_g}} = \left(\sqrt{\gamma}^{\widetilde{\sigma_g}}\right)^{1+\widetilde{\sigma_1}} = \left(\pm \sqrt{\gamma}\right)^{1+\widetilde{\sigma_1}} = \sqrt{\gamma}^{1+\widetilde{\sigma_1}}.$$  Hence $\sqrt{\gamma}^{1+\widetilde{\sigma_1}}$ must be fixed by $\widetilde{\sigma_g}$, and so $[\gamma^{1+\sigma_1}]_{K_2} \in \{[f]_{K_2},[a_1f]_{K_2}\}$.  In either case, we may use the fact that $[a_1] = [1]$ in $J(K)$ to conclude that $[\gamma]^{1+\sigma_1} = [f]$.

Now we prove the opposite implication.  Start by assuming we have some $[\gamma] \in J(K)$ which solves Equation \ref{eq:solvable.diagram}.  The extension $\widetilde{L} = K(\sqrt{\gamma},\sqrt{f},\sqrt{g})$ is Galois over $F$ (recall from the introduction that $\mathbb{F}_2[\Gal(K/F)]$-submodules of $J(K)$ correspond to elementary $2$-abelian extensions of $K$ that are Galois over $F$).  Now since $[f],[g] \in [F^\times]$ we have $L = K(\sqrt{f},\sqrt{g})$ has $\Gal(L/F) \simeq \left(\mathbb{Z}/2\mathbb{Z}\right)^{\oplus 4}$; since $[\gamma] \not\in [F^\times]$ (lest $[\gamma]^{1+\sigma_i} = [1]$), it must be the case that $\Gal(\widetilde{L}/F)$ is a central, non-split extension of $\left(\mathbb{Z}/2\mathbb{Z}\right)^{\oplus 4}$.  We will compute enough information about $\Gal(\widetilde{L}/F)$ (in terms of generators and relations) to understand this group extension through the lens of Proposition \ref{prop:Frolich}, at which point we will be able to argue $(a_1,f)+(a_2,g) \in \mathcal{S}$.

As before, let $\sigma_1,\sigma_2,\sigma_f,$ and $\sigma_g$ be elements of $\Gal(L/F)$ dual to $\sqrt{a_1},\sqrt{a_2},\sqrt{f}$, and $\sqrt{g}$ (respectively); hence $\Gal(L/F)=\langle \sigma_1,\sigma_2,\sigma_f,\sigma_g\rangle$.  We will write $\widetilde{\sigma_1},\widetilde{\sigma_2},\widetilde{\sigma_f},\widetilde{\sigma_g}$ for lifts of these elements to $\Gal(\widetilde{L}/F)$.  The other generator of $\Gal(\widetilde{L}/F)$ will be the central element $\omega$ which generates $\Gal(\widetilde{L}/L)$; this just means that $\omega$ fixes elements of $L$, but $\sqrt{\gamma}^\omega = -\sqrt{\gamma}$.  We have $\Gal(\widetilde{L}/F) = \langle \widetilde{\sigma_1}, \widetilde{\sigma_2}, \widetilde{\sigma_f}, \widetilde{\sigma_g}, \omega\rangle$. Since $\omega$ generates the kernel of the surjection $\Gal(\widetilde{L}/F) \twoheadrightarrow \Gal(L/F)$, there must be $c_1,c_2,c_3 \in \{0,1\}$ so that $$\widetilde{\sigma_1}^2 = \omega^{c_1}, [\widetilde{\sigma_1},\widetilde{\sigma_2}]=\omega^{c_2}, \widetilde{\sigma_2}^2 = \omega^{c_3}.$$

We claim that $\widetilde{\sigma_f}^2 = \text{\rm id}$.  To see this, note that the element certainly fixes all of $L$ (since it reduces to $\sigma_f^2$ there). Since $\gamma \in K^\times$ it must be fixed by $\sigma_f$; hence we have  $$\left(\sqrt{\gamma}\right)^{1-\widetilde{\sigma_f}^2} = \left(\sqrt{\gamma}^{1+\widetilde{\sigma_f}}\right)^{1-\widetilde{\sigma_f}} = \left(\pm \sqrt{\gamma^{1+\sigma_f}}\right)^{1-\widetilde{\sigma_f}} = \gamma^{1-\sigma_f} = 1.$$   Therefore $\widetilde{\sigma_f}^2$ fixes the generators of $\widetilde{L}$, and so must be trivial.  This same line of reasoning gives $\widetilde{\sigma_g}^2 = \text{\rm id}$, as well as $(\widetilde{\sigma_f}\widetilde{\sigma_g})^2 = \text{\rm id}$.  One then deduces that $[\widetilde{\sigma_f},\widetilde{\sigma_g}] = \text{\rm id}$.

We now show that $[\widetilde{\sigma_f},\widetilde{\sigma_1}] = \omega$.  First, note that $[\widetilde{\sigma_f},\widetilde{\sigma_1}]$ fixes each of $\sqrt{f},\sqrt{g},\sqrt{a_1},\sqrt{a_2}$ since each of these elements is in $L$, so that this element acts as $[\sigma_f,\sigma_1]=\text{\rm id}$.  Hence all that is left to argue is that $\sqrt{\gamma}^{[\widetilde{\sigma_f},\widetilde{\sigma_1}]} = -\sqrt{\gamma}$.  Note that $\widetilde{\sigma_f}$ permutes the roots of $X^2-\gamma$ since $\gamma \in K$; let $\varepsilon \in \{0,1\}$ be defined so that $\sqrt{\gamma}^{\widetilde{\sigma_f}} = (-1)^\varepsilon \sqrt{\gamma}$.  On the other hand, since we are assuming $[\gamma]^{1+\sigma_1}=[f]$, we know that there exists some $k \in K^\times$ so that $\sigma_1$ sends the roots of $X^2-\gamma$ to roots of $X^2-f\gamma k^2$.  We will select roots of these polynomials so that $\sqrt{\gamma}^{\widetilde{\sigma_1}} = \sqrt{f}\sqrt{\gamma}k$.  We then have 
\begin{align*}
\gamma^{\widetilde{\sigma_f}\widetilde{\sigma_1}} &= \left((-1)^\varepsilon \sqrt{\gamma}\right)^{\widetilde{\sigma_1}} = (-1)^{\varepsilon} \sqrt{f}\sqrt{\gamma}k\\
\gamma^{\widetilde{\sigma_1}\widetilde{\sigma_f}} &= \left(\sqrt{f}\sqrt{\gamma}k\right)^{\widetilde{\sigma_f}} = (-\sqrt{f})((-1)^{\varepsilon} \sqrt{\gamma})(k) =  (-1)^{\varepsilon+1}\sqrt{f} \sqrt{\gamma}k \\
\end{align*} Hence we have $\sqrt{\gamma}^{[\widetilde{\sigma_f},\widetilde{\sigma_1}]} = -\sqrt{\gamma}$ as desired.  Analogous arguments verify $[\widetilde{\sigma_g},\widetilde{\sigma_2}]=\omega$, and that $[\widetilde{\sigma_f},\widetilde{\sigma_2}] = [\widetilde{\sigma_g},\widetilde{\sigma_1}] = \text{\rm id}$.

Our computations have verified that $\Gal(\widetilde{L}/F) \twoheadrightarrow \Gal(L/F)$ is a solution to the embedding problem corresponding to the central, non-split group extension  
$$\widetilde{G} = \left\langle \omega,\widetilde{\sigma_1},\widetilde{\sigma_2},\widetilde{\sigma_f},\widetilde{\sigma_g} \Big{|} \widetilde{\sigma_1}^2 = \omega^{c_1}, \widetilde{\sigma_2}^2 = \omega^{c_3}, [\widetilde{\sigma_f},\widetilde{\sigma_1}]=  [\widetilde{\sigma_g},\widetilde{\sigma_2}]=\omega, [\widetilde{\sigma_1},\widetilde{\sigma_2}] = \omega^{c_2}\right\rangle \twoheadrightarrow \left(\mathbb{Z}/2\mathbb{Z}\right)^{\oplus 4}$$ (where omitted relations are trivial).  By Proposition \ref{prop:Frolich} we therefore have $$(a_1,f)+(a_2,g)+c_1(a_1,a_1)+c_2(a_1,a_2)+c_3(a_2,a_2) = 0$$ in $\Br(F)$.  Since $c_1(a_1,a_1)+c_2(a_1,a_2)+c_3(a_2,a_2) \in \mathcal{S}$, it then follows that $(a_1,f)+(a_2,g) \in \mathcal{S}$ as well.
\end{proof}

\section{The $X$ summand and $\mathcal{S}$}\label{sec:exceptional.summand}

The exceptional summand from Proposition \ref{prop:CMSS.main} is constructed in \cite{CMSS} by examining a particular homomorphism $T:J(K)^G \to \bigoplus_{i=1}^3 (K_i^\times \cap K^{\times 2})/K_i^{\times 2}$ defined by $$T([\gamma]) = ([N_{K/K_1}(\gamma)]_{K_1},[N_{K/K_2}(\gamma)]_{K_2},[N_{K/K_3}(\gamma)]_{K_3}).$$  Note that since $\gamma \in J(K)^G$ we have $N_{K/K_i}(\gamma) \in K^{\times 2} \cap K_i^\times$.  By Kummer theory, this means that $[N_{K/K_1}(\gamma)]_{K_1} \in \{[1]_{K_1},[a_2]_{K_1}\}$, that $[N_{K/K_2}(\gamma)]_{K_2} \in \{[1]_{K_2},[a_1]_{K_2}\}$, and that $[N_{K/K_3}(\gamma)]_{K_3} \in \{[1]_{K_3},[a_1]_{K_3}\}$.  In order to make outputs of $T$ more legible, from here on out we will suppress the bracket-and-subscript notation for each of its components; instead, we ask the reader to interpret each coordinate as an element in the appropriate $J(K_i)$.

The power of the function $T$ is that it can be used as a way to distinguish those elements of $J(K)^G$ which come from the ``obvious'' fixed submodule $[F^\times]$.  To prove this, one considers the Galois group of the extension $K(\sqrt{\gamma})/F$ and computes how lifts of $\sigma_1$ and $\sigma_2$ behave in this group.  Specifically, we have

\begin{lemma}[{\cite[Lem.~4.1]{CMSS}}]\label{le:group.elements.and.T}
Suppose that $[\gamma] \in J(K)^G$ is nontrivial, and let $L = K(\sqrt{\gamma})/F$.  Write $\widetilde{\sigma_1},\widetilde{\sigma_2}$ for lifts of $\sigma_1,\sigma_2$ to $\Gal(L/F)$.  Then
\begin{align*}
[N_{K/K_1}(\gamma)]_{K_1}=[1]_{K_1} &\Leftrightarrow \widetilde{\sigma_2}^2 = \text{\rm id}\\
[N_{K/K_2}(\gamma)]_{K_2}=[1]_{K_2} &\Leftrightarrow \widetilde{\sigma_1}^2 = \text{\rm id}\\
[N_{K/K_3}(\gamma)]_{K_3}=[1]_{K_3} &\Leftrightarrow \left(\widetilde{\sigma_1} \widetilde{\sigma_2}\right)^2 = \text{\rm id}.
\end{align*}
\end{lemma}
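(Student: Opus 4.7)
The plan is to prove all three equivalences by the same direct computation, analyzing the action of the appropriate lift on $\sqrt\gamma$. First some setup: since $[\gamma]\in J(K)^G$ is nontrivial, $L = K(\sqrt\gamma)$ is a quadratic extension of $K$ that is Galois over $F$, and $\Gal(L/K) = \langle \omega\rangle$ where $\omega$ is the nontrivial element characterized by $\omega(\sqrt\gamma)=-\sqrt\gamma$. For any element $\tau \in G$ of order $2$ and any lift $\widetilde\tau \in \Gal(L/F)$, the square $\widetilde\tau^2$ lies in $\langle\omega\rangle$; moreover $(\omega\widetilde\tau)^2 = \omega^2\widetilde\tau^2 = \widetilde\tau^2$ by centrality of $\omega$, so the value $\widetilde\tau^2 \in \{\text{\rm id},\omega\}$ does not depend on the choice of lift. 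The condition $\widetilde\tau^2 = \text{\rm id}$ is therefore detected by whether $\widetilde\tau^2$ fixes $\sqrt\gamma$.

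For the first equivalence, since $[\gamma]$ is fixed by $\sigma_2$ the element $\gamma^{1+\sigma_2} = N_{K/K_1}(\gamma)$ is a square in $K$; choose $k \in K$ with $k^2 = N_{K/K_1}(\gamma)$. After possibly replacing $k$ by $-k$ (equivalently, after passing between the two lifts of $\sigma_2$), we can arrange $\widetilde{\sigma_2}(\sqrt\gamma) = k/\sqrt\gamma$. A direct calculation then gives
$$\widetilde{\sigma_2}^2(\sqrt\gamma) \;=\; \widetilde{\sigma_2}(k)/\widetilde{\sigma_2}(\sqrt\gamma) \;=\; k^{\sigma_2}/(k/\sqrt\gamma) \;=\; (k^{\sigma_2}/k)\sqrt\gamma.$$
Since $k^2 \in K_1$ we have $k^{\sigma_2} = \pm k$, with the $+$ sign occurring exactly when $k$ is fixed by $\sigma_2$, i.e.\ when $k \in K_1$ (recall $H_1 = \langle \sigma_2\rangle$). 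Finally $k \in K_1 \Leftrightarrow k^2 \in K_1^{\times 2}$, so $\widetilde{\sigma_2}^2 = \text{\rm id}$ if and only if $N_{K/K_1}(\gamma)\in K_1^{\times 2}$. The second equivalence is the symmetric statement, obtained by interchanging the roles of $\sigma_1$ and $\sigma_2$.

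For the third equivalence I would run the same argument with the order-$2$ element $\sigma_1\sigma_2$ replacing $\sigma_2$, noting that $H_3 = \langle \sigma_1\sigma_2\rangle$ gives $N_{K/K_3}(\gamma) = \gamma^{1+\sigma_1\sigma_2}$. Writing $m^2 = N_{K/K_3}(\gamma)$ and setting $\tau = \widetilde{\sigma_1}\widetilde{\sigma_2}$ as the relevant lift, the same computation yields $\tau^2(\sqrt\gamma) = (m^{\sigma_1\sigma_2}/m)\sqrt\gamma$, which is trivial exactly when $m \in K_3$, equivalently $N_{K/K_3}(\gamma) \in K_3^{\times 2}$.

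The only real challenge is the sign bookkeeping: the ambiguity in choosing a square root of $N_{K/K_i}(\gamma)$ matches the ambiguity in the lift, and one must check that the resulting equivalence is insensitive to both choices --- which is immediate from the centrality of $\omega$. Beyond this, the proof is a routine calculation inside $\Gal(L/F)$, with no substantive obstacle.
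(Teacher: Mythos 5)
Your proof is correct: the reduction to whether $\widetilde{\tau}^2 \in \Gal(L/K)=\langle\omega\rangle$ fixes $\sqrt{\gamma}$, the normalization $\widetilde{\sigma_2}(\sqrt{\gamma})=k/\sqrt{\gamma}$ with $k^2=N_{K/K_1}(\gamma)$, and the observation that $k^{\sigma_2}/k=1$ exactly when $k\in K_1$, equivalently $N_{K/K_1}(\gamma)\in K_1^{\times 2}$, together with the lift-independence coming from centrality of $\omega$, is exactly the standard computation this lemma rests on. The paper itself does not reprove the statement but imports it from \cite[Lem.~4.1]{CMSS}, and your argument is essentially the same direct manipulation of lifts acting on $\sqrt{\gamma}$ that the paper uses in its related computations, so there is nothing further to add.
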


The exceptional summand $X$ is ultimately constructed as a module so that $T(X^G) = \text{\rm im}(T)$, but in such a way that $X^G \cap [F^\times]$ is ``as small as possible.''  The details are not particularly important for our discussion.  Instead, the critical fact comes from \cite[Th.~3]{CMSS}, which says that $X$ is nontrivial precisely when $\dim(\text{\rm im}(T))\geq 1$, and that
$$  
X \simeq \left\{\begin{array}{ll}
\mathbb{F}_2,&\text{ if $\dim(\im(T)) = 1$}\\
\Omega^{-1},&\text{ if $\dim(\im(T)) = 2$ and $\im(T)$ is one of the ``coordinate planes''}\\
\mathbb{F}_2\oplus \mathbb{F}_2,&\text{ if $\dim(\im(T)) = 2$ and $\im(T)$ is not one of the ``coordinate planes''}\\
\Omega^{-2},&\text{ if $\dim(\im(T)) = 3$ and $T\left([N_{K/K_1}(K^\times)] \cap [N_{K/K_2}(K^\times)] \right) \neq \{(1,1,1)\}$}\\
\Omega^{-1}\oplus\Omega^{-1},&\text{ if $\dim(\im(T)) = 3$ and $T\left([N_{K/K_1}(K^\times)] \cap [N_{K/K_2}(K^\times)]\right) = \{(1,1,1)\}$.}
\end{array}\right.
$$
The proof of Theorem \ref{th:main.exceptional}, therefore, requires one to draw a connection between $\mathcal{S}$ and $\text{\rm im}(T)$.  The result which delivers this connection is the following

\begin{proposition}\label{prop:T.and.V}
Let $T$ be defined as above.  Then 
\begin{align*}
(a_2,1,1) \in \text{\rm im}(T) &\Leftrightarrow (a_1,a_2)+(a_2,a_2)=0;&&
(1,1,a_1) \in \text{\rm im}(T) &\Leftrightarrow (a_1,a_2)=0;\\
(1,a_1,1) \in \text{\rm im}(T) &\Leftrightarrow (a_1,a_1)+(a_1,a_2)=0;&&
(a_2,1,a_1) \in \text{\rm im}(T) &\Leftrightarrow (a_2,a_2)=0;\\
(a_2,a_1,1) \in \text{\rm im}(T) &\Leftrightarrow (a_1,a_1)+(a_2,a_2)=0;&&
(1,a_1,a_1) \in \text{\rm im}(T) &\Leftrightarrow (a_1,a_1)=0;\\
(a_2,a_1,a_1) \in \text{\rm im}(T) &\Leftrightarrow (a_1,a_1)+(a_1,a_2)+(a_2,a_2)=0.
\end{align*}
\end{proposition}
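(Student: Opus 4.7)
The plan is to reduce each of the seven equivalences to a single application of Proposition \ref{prop:Frolich}, using Lemma \ref{le:group.elements.and.T} as a translator between the condition $T([\gamma]) = (v_1,v_2,v_3)$ and the isomorphism type of $\Gal(K(\sqrt{\gamma})/F)$ as a central extension of $G = \Gal(K/F)$ by $\langle \omega \rangle \simeq \mathbb{Z}/2\mathbb{Z}$.

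First I would read Lemma \ref{le:group.elements.and.T} in its contrapositive form: the first coordinate of $T([\gamma])$ equals $a_2$ precisely when $\widetilde{\sigma_2}^2 = \omega$; the second equals $a_1$ precisely when $\widetilde{\sigma_1}^2 = \omega$; and the third equals $a_1$ precisely when $(\widetilde{\sigma_1}\widetilde{\sigma_2})^2 = \omega$. Writing $\widetilde{\sigma_i}^2 = \omega^{c_{ii}}$ and $[\widetilde{\sigma_1},\widetilde{\sigma_2}] = \omega^{c_{12}}$, a short commutator computation in the central extension yields $(\widetilde{\sigma_1}\widetilde{\sigma_2})^2 = \omega^{c_{11}+c_{12}+c_{22}}$. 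Thus each of the seven nontrivial vectors $(v_1,v_2,v_3)$ pins down a unique triple $(c_{11},c_{12},c_{22}) \in \mathbb{F}_2^3$: specifically $c_{22}$ records whether $v_1 = a_2$, $c_{11}$ records whether $v_2 = a_1$, and $c_{12}$ is forced by the value of $v_3$ via the identity above. One checks directly that none of these seven triples is $(0,0,0)$, so the associated central extension $\widetilde{G}$ is non-split and Proposition \ref{prop:Frolich} applies.

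With this dictionary established, the main argument is a routine back-and-forth between $[\gamma]$-classes and solutions of embedding problems. For the forward direction, any $[\gamma] \in J(K)^G$ satisfying $T([\gamma]) = (v_1,v_2,v_3)$ gives rise to a Galois extension $K(\sqrt{\gamma})/F$ whose Galois group is isomorphic to the extension $\widetilde{G}$ associated with $(c_{11},c_{12},c_{22})$, hence solves the corresponding embedding problem over $K/F$. For the reverse, any solution $\widetilde{L}/F$ to the embedding problem has $[\widetilde{L}:K] = 2$, so $\widetilde{L} = K(\sqrt{\gamma})$ for some $\gamma \in K^\times$; that $\widetilde{L}/F$ is Galois forces $[\gamma] \in J(K)^G$ by Kummer theory, and then the prescribed relations in $\widetilde{G}$ combine with Lemma \ref{le:group.elements.and.T} to force $T([\gamma]) = (v_1,v_2,v_3)$. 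Applying Proposition \ref{prop:Frolich} then converts the solvability of each such embedding problem into the single condition $c_{11}(a_1,a_1) + c_{12}(a_1,a_2) + c_{22}(a_2,a_2) = 0$ in $\Br(F)$, and substituting the seven triples produced in the dictionary recovers exactly the seven stated equivalences. The main obstacle is the bookkeeping: one must correctly carry out the commutator identity and then faithfully extract $(c_{11},c_{12},c_{22})$ from each vector; once that case analysis is in hand, the proposition follows.
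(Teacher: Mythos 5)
Your proposal is correct and follows essentially the same route as the paper: both translate the condition $T([\gamma])=(v_1,v_2,v_3)$ via Lemma \ref{le:group.elements.and.T} into the relations $\widetilde{\sigma_1}^2=\omega^{c_{11}}$, $\widetilde{\sigma_2}^2=\omega^{c_{22}}$, $[\widetilde{\sigma_1},\widetilde{\sigma_2}]=\omega^{c_{12}}$ in $\Gal(K(\sqrt{\gamma})/F)$ and then invoke Proposition \ref{prop:Frolich} in both directions. The only difference is organizational: you set up the uniform dictionary $(c_{11},c_{12},c_{22})$ and the identity $(\widetilde{\sigma_1}\widetilde{\sigma_2})^2=\omega^{c_{11}+c_{12}+c_{22}}$ once for all seven vectors, whereas the paper works out one representative case in detail and notes the rest are analogous.
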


\begin{proof}
Each of these statements is proved in the same way.  We will focus on the proof of the first result.

First assume that there exists some $[\gamma] \in J(K)^G$ so that $T([\gamma]) = (a_2,1,1)$.  Note that since $[\gamma] \in J(K)^G$ we have that $K(\sqrt{\gamma})/F$ is Galois.  Let $\widetilde{\sigma_1}$ and $\widetilde{\sigma_2}$ be lifts of $\sigma_1,\sigma_2 \in \Gal(K/F)$, and let $\Gal(K(\sqrt{\gamma})/K) = \langle \omega\rangle$.  We have that $\omega \in Z(\Gal(K(\sqrt{\gamma})/F))$.  Furthermore, by Lemma \ref{le:group.elements.and.T} we also have $$\widetilde{\sigma_2}^2 = \omega, \widetilde{\sigma_1}^2 = \text{\rm id}, \left(\widetilde{\sigma_1}\widetilde{\sigma_2}\right)^2=\text{\rm id}.$$  Notice that these relations imply $$[\widetilde{\sigma_1},\widetilde{\sigma_2}] = \widetilde{\sigma_1}^{-1}\widetilde{\sigma_2}^{-1}\widetilde{\sigma_1}\widetilde{\sigma_2} = \widetilde{\sigma_1}\widetilde{\sigma_2}^3\widetilde{\sigma_1}\widetilde{\sigma_2} = \widetilde{\sigma_1}\widetilde{\sigma_2}\omega\widetilde{\sigma_1}\widetilde{\sigma_2} = \omega\left(\widetilde{\sigma_1}\widetilde{\sigma_2}\right)^2 = \omega.$$ Since $K(\sqrt{\gamma})/F$ solves the embedding problem corresponding to the central, non-split group extension $$\left\langle \widetilde{\sigma_1},\widetilde{\sigma_2},\omega \Big{|} \widetilde{\sigma_1}^2 = \text{\rm id}, \widetilde{\sigma_2}^2 =  [\widetilde{\sigma_1},\widetilde{\sigma_2}]=\omega \right\rangle \twoheadrightarrow \langle \sigma_1,\sigma_2\rangle
,$$ Proposition \ref{prop:Frolich} tells us that $(a_1,a_2)+(a_2,a_2)=0.$

Now suppose that $(a_1,a_2)+(a_2,a_2)=0$.  By Proposition \ref{prop:Frolich}, the embedding problem corresponding to the central, non-split group extension $$\left\langle \widetilde{\sigma_1},\widetilde{\sigma_2},\omega \Big{|} \widetilde{\sigma_1}^2 = \text{\rm id}, \widetilde{\sigma_2}^2 =  [\widetilde{\sigma_1},\widetilde{\sigma_2}]=\omega \right\rangle \twoheadrightarrow \langle \sigma_1,\sigma_2\rangle$$ is solvable over $\Gal(K/F)$.  Let $L/F$ be an extension which solves this embedding problem; since $L$ is a quadratic extension of $K$, we know that there exists some $\gamma \in K$ with $L = K(\sqrt{\gamma})$.  Since $L/F$ is Galois, this implies that $[\gamma] \in J(K)^G$. Notice that the relations amongst the generators of $\Gal(L/F)$ imply 
$$\left(\widetilde{\sigma_1}\widetilde{\sigma_2}\right)^2 = \widetilde{\sigma_1}\widetilde{\sigma_2}\widetilde{\sigma_2}\widetilde{\sigma_1}\omega=\widetilde{\sigma_1}\omega\widetilde{\sigma_1}\omega = \widetilde{\sigma_1}^2\omega^2 = \text{\rm id}.$$   An appeal to Lemma \ref{le:group.elements.and.T} then gives $T([\gamma]) = (a_2,1,1)$, as desired.
\end{proof}

\section{Some realizability results}\label{sec:realizability}

We now turn our attention to the question of whether the given summand types we see in Proposition \ref{prop:CMSS.main} actually occur.  Some partial results in this vein have already been discussed in \cite{CMSS}, where it is shown that many of the possible structures of the exceptional summand are realizable.  Ultimately we will construct biquadratic extensions of $\mathbb{Q}$ which exhibit each of the unexceptional summand types: we find nontrivial $Y_0$ and $Z_1$ summands in Example \ref{ex:first}, a nontrivial $Z_2$ summand in Example \ref{ex:second}, and nontrivial $Y_1,Y_2,Y_3,$ and $Y_4$ summands in Example \ref{ex:third}.

Since our extensions will be biquadratic extensions of $\mathbb{Q}$, we will be interested in elements of $\text{Br}(\mathbb{Q})$. To analyze such elements, we will make repeated use of the local-global principle: an element $(a,b) \in \text{Br}(\mathbb{Q})$ is trivial if and only if the local Hilbert symbol $(a,b)_v \in \{\pm 1\}$ is trivial for each place $v \in \{\infty,2,3,5,\cdots\}$ (see \cite[Sec.~18.4]{Pierce}).  Computation of each $(a,b)_v$ are then carried out using well-established formulae.  We will stick with additive notation for elements of $\text{Br}(\mathbb{Q})$, but use multiplicative notation for local Hilbert symbols. In particular, the trivial element in $\text{\rm Br}(\mathbb{Q})$ is denoted $0$, but an element $(a,b) \in \mathbb{Q}$ is trivial at a local place $v$ if $(a,b)_v=1$.

\begin{example}\label{ex:first}
Consider $K/F$ given by $\mathbb{Q}(\sqrt{7},\sqrt{-5})/\mathbb{Q}$.  First we show that $\dim \mathcal{S} = 3$.  To verify this, suppose we had some combination $e_1(7,7)+e_2(7,-5)+e_3(-5,-5)$ which vanishes in $\text{Br}(\mathbb{Q})$.  Examining this equality locally at $v = \infty$ and using the fact that $(7,7)_\infty = (7,-5)_\infty = 1$ whereas $(-5,-5)_\infty = -1$, we therefore find $$1 = \left(e_1(7,7)+e_2(7,-5)+e_3(-5,-5)\right)_\infty = (7,7)_\infty^{e_1}(7,-5)_\infty^{e_2}(-5,-5)_\infty^{e_3} = (-1)^{e_3}.$$  Hence we must have $e_3 = 0$.  Employing a similar strategy at the place $v=5$ (where we have $(7,7)_5=(-5,-5)_5=1$, but $(7,-5)_5 = (7,5)_5 = \left(\frac{7}{5}\right) = \left(\frac{2}{5}\right)=-1$) gives $e_2=0$, and at the place $v=7$ (where $(7,7)_7 = \left(\frac{-1}{7}\right)=-1$ but $(7,-5)_7 = \left(\frac{-1}{7}\right)\left(\frac{5}{7}\right)=(-1)^2=1$ and $(-5,-5)_7=1$) gives $e_1=0$.  Hence the only combination $e_1(7,7)+e_2(7,-5)+e_3(-5,-5)$ which can be trivial is the trivial combination, whence $\dim \mathcal{S} = 3$ as desired.

We show that the corresponding module  has non-trivial $Y_0$ and $Z_1$ summands.  The former is not too difficult, since we simply need to find some element from $\mathbb{Q}$ which is not a square in $\mathbb{Q}(\sqrt{7},\sqrt{-5})$, but which is in the image of the norm map.  A quick calculation shows that $$N_{\mathbb{Q}(\sqrt{7},\sqrt{-5})/\mathbb{Q}}(1+\sqrt{7}+\sqrt{-5}) = 141$$ fits the bill, so that $[N_{\mathbb{Q}(\sqrt{7},\sqrt{-5})/\mathbb{Q}}(\mathbb{Q}(\sqrt{7},\sqrt{-5})^\times)]$ is nontrivial.  By Theorem \ref{th:main.unexceptional}, we have $\text{mult}(Y_0) = \dim [N_{\mathbb{Q}(\sqrt{7},\sqrt{-5})/\mathbb{Q}}(\mathbb{Q}(\sqrt{7},\sqrt{-5})^\times)] > 0$.

Next we argue that $\text{mult}(Z_1)>0$ by proving that $A$ is a proper subset of $V$ (with $A$ and $V$ as defined in Equation \ref{eq:Brauer.subspaces}) for our extension.  Note first that for any biquadratic extension $K/F$, if $[f] \in [N_{K/F}(K^\times)]$ then we must have $(a_1,f) = 0 = (a_2,f)$.  (This is why $A$ is always a subspace of $V$.)  Hence to argue that $A$ is a proper subspace of $V$ for the biquadratic extension $\mathbb{Q}(\sqrt{7},\sqrt{-5})/\mathbb{Q}$, we can argue that there exists some (in fact, infinitely many linearly independent) $[f] \in [\mathbb{Q}^\times]$ with the property that $(7,f)$ and $(-5,f)$ are both nontrivial elements of $\langle (7,7),(7,-5),(-5,-5)\rangle$.   

Let $p$ be any prime satisfying $p \equiv 1,4 \pmod{5}$, $p \equiv 1\pmod{4}$, and $p \equiv 1,2,4 \pmod{7}$.  We will show $(-5,-p) = (-5,-5)$ and that $(7,-p) = (7,7)$.  For the former, note that $(-5,-5)_v = -1$ if and only if $v \in \{\infty,2\}$.  On the other hand, $(-5,-p)_v$ is certainly trivial for $v \not\in \{\infty,2,5,p\}$.  But then we have 
$$(-5,-p)_v = \left\{\begin{array}{ll}
(-1,-1)_\infty(-1,p)_\infty(5,-1)_\infty(5,p)_\infty = -1\cdot 1 \cdot 1 \cdot 1=-1,&\text{ if }v = \infty\\
(-1,-1)_2(-1,p)_2(5,-1)_2(5,p)_2 = -1\cdot (-1)^{\frac{p-1}{2}} \cdot 1 \cdot 1=-1,&\text{ if }v = 2\\
(-1,-1)_5(-1,p)_5(5,-1)_5(5,p)_5 = 1\cdot 1 \cdot \left(\frac{-1}{5}\right) \cdot \left(\frac{p}{5}\right)=1,&\text{ if }v = 5\\
(-1,-1)_p(-1,p)_p(5,-1)_p(5,p)_p = 1\cdot \left(\frac{-1}{p}\right) \cdot 1 \cdot \left(\frac{5}{p}\right) = 1,&\text{ if }v = p\\
\end{array}\right.$$
Similar calculations verify that $(7,-p) = (7,7)$.
%
\end{example}

\begin{example}\label{ex:second}
Consider $K/F$ given by $\mathbb{Q}(\sqrt{33},\sqrt{35})/\mathbb{Q}$.  We will argue that for this extension, $\text{mult}(Z_2)>0$.  To do this, we will find elements $[f],[g] \in [\mathbb{Q}^\times]$ so that $(33,f),(35,g) \in \mathcal{S}$, but with $(33,g)=(35,f) \not\in \mathcal{S}$.  We will then have $[f] \in W_B \setminus V$, after which an appeal to Theorem \ref{th:main.unexceptional} gives the desired result for $\rk(Z_2)$.  It will be useful to know that $(33,33)_v = -1$ if and only if $v \in \{3,11\}$, that $(35,35)_v = -1$ if and only if $v \in \{2,7\}$, and that $(33,35)_v = -1$ if and only if $v \in \{3,5,7,11\}$.

Let $p$ be any prime with $p \equiv 2 \pmod{3}$, $p \equiv 3 \pmod{4}$, $p \equiv 2,3 \pmod{5}$, $p \equiv 1,2,4 \pmod{7}$, and $p \equiv 2,6,7,8,10 \pmod{11}$, and let $q$ be any prime with $q \equiv 1 \pmod{3}$, $q \equiv 1 \pmod{4}$, $q \equiv 1,4 \pmod{5}$, $q \equiv 1,2,4 \pmod{7}$, and $q \equiv 1,3,4,5,9 \pmod{11}$.  We will show that $(33,3pq)=(33,33)$, that $(35,7pq)$ is trivial, and that $(33,7pq)=(35,3pq) \not\in \mathcal{S}$.

First we show that $(33,3pq) = (33,33)$, which means we need to show that $(33,3pq)_v =-1$ if and only if $v \in \{3,11\}$.  Certainly we have $(33,3pq)_v = 1$ for any $v \not\in \{2,3,11,p,q\}$, and 
$$(33,3pq)_v = \left\{\begin{array}{ll}
(3,3)_2(3,p)_2(3,q)_2(11,3)_2(11,p)_2(11,q)_2 = (-1)^4 =  1,&\text{ if }v =2\\
(3,3)_3(3,p)_3(3,q)_3(11,3)_3 = \left(\frac{-1}{3}\right)\cdot \left(\frac{p}{3}\right)\cdot \left(\frac{q}{3}\right)\cdot \left(\frac{11}{3}\right) = -1,&\text{ if }v =3\\
(11,3)_{11}(11,p)_{11}(11,q)_{11} = \left(\frac{3}{11}\right)\cdot \left(\frac{p}{11}\right)\cdot \left(\frac{q}{11}\right) =-1,&\text{ if }v =11\\
(3,p)_p(11,p)_p = \left(\frac{3}{p}\right)\left(\frac{11}{p}\right) = -1\cdot -1 = 1,&\text{ if }v =p\\
(3,q)_q(11,q)_q = \left(\frac{3}{q}\right)\left(\frac{11}{q}\right) = (1)^2 = 1,&\text{ if }v =q.\\
\end{array}\right.$$

Now we show that $(35,7pq)$ is trivial, which means we must show that $(35,7pq)_v=1$ for all places $v$.  Certainly this is true for $v \not\in \{2,5,7,p,q\}$, so we compute:
$$(35,7pq)_v = \left\{\begin{array}{ll}
(5,7)_2(5,p)_2(5,q)_2(7,7)_2(7,p)_2(7,q)_2 = (-1)^2 =  1,&\text{ if }v =2\\
(5,7)_5(5,p)_5(5,q)_5 = \left(\frac{7}{5}\right)\cdot \left(\frac{p}{5}\right)\cdot \left(\frac{q}{5}\right) = (-1)^2=1,&\text{ if }v =5\\
(5,7)_7(7,7)_7(7,p)_7(7,q)_7 = \left(\frac{5}{7}\right)\cdot \left(\frac{-1}{7}\right) \cdot \left(\frac{p}{7}\right)\cdot \left(\frac{q}{7}\right) =(-1)^2=1,&\text{ if }v =7\\
(5,p)_p(7,p)_p = \left(\frac{5}{p}\right)\left(\frac{7}{p}\right) = -1\cdot -1 = 1,&\text{ if }v =p\\
(5,q)_q(7,q)_q = \left(\frac{5}{q}\right)\left(\frac{7}{q}\right) = (1)^2 = 1,&\text{ if }v =q.\\
\end{array}\right.$$

Finally, we argue that $(33,7pq)=(35,3pq) \not\in \mathcal{S}$.  Certainly each of these is trivial for places $v \not\in \{2,3,5,7,11,p,q\}$, so we compute:
$$(33,7pq)_v = \left\{\begin{array}{ll}
(3,7)_2(3,p)_2(3,q)_2(11,7)_2(11,p)_2(11,q)_2 = (-1)^4 =  1,&\text{ if }v =2\\
(3,7)_3(3,p)_3(3,q)_3 =\left(\frac{7}{3}\right)\cdot\left(\frac{p}{3}\right)\cdot\left(\frac{q}{3}\right) =  -1,&\text{ if }v =3\\
1,&\text{ if }v =5\\
(3,7)_7(11,7)_7 = \left(\frac{3}{7}\right)\cdot \left(\frac{11}{7}\right)=-1,&\text{ if }v =7\\
(11,7)_{11}(11,p)_{11}(11,q)_{11} = \left(\frac{7}{11}\right)\cdot\left(\frac{p}{11}\right)\cdot\left(\frac{q}{11}\right) = (-1)^2=1,&\text{ if }v =11\\
(3,p)_p(11,p)_p = \left(\frac{3}{p}\right)\cdot\left(\frac{11}{p}\right) =\left(-(-1)\right)^2 = 1,&\text{ if }v =p\\
(3,q)_q(11,q)_q = \left(\frac{3}{q}\right)\cdot\left(\frac{11}{q}\right) = (1)^2 = 1,&\text{ if }v =q\\
\end{array}\right.$$
$$(35,3pq)_v = \left\{\begin{array}{ll}
(5,3)_2(5,p)_2(5,q)_2(7,3)_2(7,p)_2(7,q)_2 = (-1)^2 =  1,&\text{ if }v =2\\
(5,3)_3(7,3)_3 =\left(\frac{5}{3}\right)\cdot\left(\frac{7}{3}\right) =  -1,&\text{ if }v =3\\
(5,3)_5(5,p)_5(5,q)_5=\left(\frac{3}{5}\right)\cdot\left(\frac{p}{5}\right)\cdot\left(\frac{q}{5}\right)=(-1)^2=1,&\text{ if }v =5\\
(7,3)_7(7,p)_7(7,q)_7 = \left(\frac{3}{7}\right)\cdot \left(\frac{p}{7}\right)\cdot\left(\frac{q}{7}\right)=-1,&\text{ if }v =7\\
1,&\text{ if }v =11\\
(5,p)_p(7,p)_p = \left(\frac{5}{p}\right)\cdot\left(\frac{7}{p}\right) =(-1)^2 = 1,&\text{ if }v =p\\
(5,q)_q(7,q)_q = \left(\frac{5}{q}\right)\cdot\left(\frac{7}{q}\right) = (1)^2 = 1,&\text{ if }v =q.\\
\end{array}\right.$$
This verifies that $(33,7pq)=(35,3pq)$.  To see that this element is not in $\mathcal{S}$, notice that each of the generators has the property that the local values at $v=3$ and $v=11$ agree.  Since $(33,7pq)_3 = -(33,7pq)_{11}$, it must be the case that $(33,7pq) \not\in \mathcal{S}$.
\end{example}

\begin{example}\label{ex:third}
Consider $K/F$ given by $\mathbb{Q}(\sqrt{5},\sqrt{41\cdot 61})/\mathbb{Q}$.  It is not hard to check that $(a,b)=0$ for all $a,b \in \{5,41,61\}$; in particular, this gives $\mathcal{S} = \{0\}$.  We will ultimately show that this extension has $\rk(Y_1),\rk(Y_2),\rk(Y_3)$, and $\rk(Y_4)$ all positive.  To do this, it will be useful to note that in this case we have $V=W_B=W_C=W_D$.  To see this is true, we must prove that whenever $[f],[g] \in [\mathbb{Q}^\times]$ are given such that $[f] \in B$ and $[g] \in C$ and $(a_2,f)+(a_1,g) \in \mathcal{S}$, then in fact $[f] \in C$ and $[g] \in B$.  Given $a_1=5$ and $a_2 = 41\cdot 61$ and $\mathcal{S}=\{0\}$, this amounts to showing that whenever $(5,f)=(41\cdot 61,g)=0$ and $(5,g)=(41\cdot 61,f)$, then in fact $(41\cdot 61,f) = (5,g)=0$ as well. 

Suppose that we have $(5,f)=(41\cdot 61,g)=0$, and that $(5,g)=(41\cdot 61,f)$.  Note first that we can assume $f,g \in \mathbb{Z}$ (since we can multiply through by the square of the denominators).  Hence write \begin{align*}
f &= (-1)^{\alpha_1}5^{\beta_1}41^{\gamma_1}61^{\delta_1} p_1\cdots p_s q_1\cdots q_t\\
g &= (-1)^{\alpha_2}5^{\beta_2}41^{\gamma_2}61^{\delta_2}p_1\cdots p_s r_1\cdots r_\ell,
\end{align*} 
where $\alpha_1,\beta_1,\gamma_1,\delta_1,\alpha_2,\beta_2,\gamma_2,\delta_2\in \{0,1\}$ and $5,41,61,p_1,\cdots,p_s,q_1,\cdots,q_t,r_1,\cdots,r_\ell$ are distinct primes.  Examining $(5,f)_v$ at places $v$ corresponding to $p_i$, the equality $(5,f)=0$ implies $(5,f)_{p_i} = \left(\frac{p_i}{5}\right)=1$ for all $1 \leq i \leq s$. 
Now let us analyze $(41\cdot 61,f)=(5,g)$.  We will show that this element is trivial at every place.  Using the fact that $(5,-1)=(5,41)=(5,61)=0$, we know that $(5,g)_v=1$ for all $v \not\in \{5,p_1,\cdots,p_s,r_1,\cdots,r_\ell\}$.  Similarly we have $(41\cdot 61,f)_v=1$ for all $v \not\in \{41,61,p_1,\cdots,p_s,q_1,\cdots,q_t\}$ (this time taking advantage of equalities like $(61,41)=(41,-1)=0$). Hence the only possibility that $(41\cdot 61,f)_v = (5,g)_v \neq 1$ is if $v \in \{p_1,\cdots,p_s\}$.  But observe that $(5,g)_{p_i} = \left(\frac{5}{p_i}\right) = (5,f)_{p_i} = 1$, completing the proof.


Since we have shown that $V = W_B = W_C = W_D$, Theorem \ref{th:main.unexceptional} gives $\rk(Y_1) = \codim(V \hookrightarrow B)$, $\rk(Y_2)=\codim(V\hookrightarrow C)$, and $\rk(Y_3)=\codim(V\hookrightarrow D)$.  Hence we must only show that $V$ is a proper subspace of $B$, $C$, and $D$.  For the first, let $p$ be any prime which is a square modulo $5$ and modulo $61$, but not a square modulo $41$; then we have $(5,p)=0$ but $(41\cdot 61,p) \neq 0$, and so $[p] \in B \setminus V$.  For the second, let $q$ be any prime which is a square modulo $41$ and $61$, but not a square modulo $5$; then we have $(41\cdot 61,q)=0$ but $(5,q) \neq 0$, so that $[q] \in C \setminus V$.  For the third, let $r$ and $s$ be primes that are neither squares modulo $5$ nor modulo $41$, but are squares modulo $61$.  One then verifies that $(5\cdot 41\cdot 61,rs)=0$, but that $(5,rs) \neq 0$ and $(41\cdot 61,rs) \neq 0$.

Finally, we argue that $\rk(Y_4)>0$.  Following our reasoning from two paragraphs above, we see that $[b] \in B$ if and only if when there exists a square-free integer representative $b=(-1)^{\alpha_1} 5^{\beta_1} 41^{\gamma_1} 61^{\delta_1} p_1\cdots p_s$ such that $\left(\frac{p_i}{5}\right)=1$ for all $1 \leq i \leq s$.  Likewise $[c] \in C$ if and only if we can choose a square-free integer representative $c=(-1)^{\alpha_2} 5^{\beta_2} 41^{\gamma_2} 61^{\delta_2} q_1\cdots q_t$ so that $\left(\frac{q_j}{41}\right)\cdot\left(\frac{q_j}{61}\right)=1$ for all $1 \leq j \leq t$, and with the further property that $\prod_{j=1}^t \left(\frac{q_i}{41}\right) = 1 = \prod_{j=1}^t \left(\frac{q_j}{61}\right)$.  Finally, $[d] \in D$ if and only if we can choose a square-free integer representative $d=(-1)^{\alpha_3} 5^{\beta_3} 41^{\gamma_3} 61^{\delta_3} r_1\cdots r_\ell$ with the property that $\left(\frac{r_k}{5}\right)\cdot\left(\frac{r_k}{41}\right)\cdot\left(\frac{r_k}{61}\right)=1$ for all $1 \leq k \leq \ell$, and further with the properties that $\prod_{k=1}^\ell \left(\frac{r_k}{5}\right) = \prod_{k=1}^\ell \left(\frac{r_k}{41}\right) = \prod_{k=1}^\ell \left(\frac{r_k}{61}\right) =1$.  Hence an element $q \in [\mathbb{Q}^\times]$ has the property that $[q] = [b][c][d]$ for $[b] \in B$, $[c] \in C$, and $[d] \in D$ if and only if $q$ is equivalent to an integer whose prime factors (outside of $5$, $41$, and $61$) consist of primes congruent to a square modulo $5$; primes that are simultaneously squares modulo $41$ and $61$; pairs of primes that are simultaneously \emph{not} squares modulo $41$ or $61$; pairs of primes which are not squares modulo $5$ or $41$, but are square modulo $61$; and pairs of primes which are not squares modulo $5$ and $61$, but are squares modulo $41$.  Therefore let $\pi_1\not\in\{5,41,61\}$ be any prime that is neither a square modulo $5$ nor $41$, but is a square modulo $61$, and let $\pi_2 \not\in\{5,41,61\}$ be any prime which is neither a square modulo $5$ nor $61$, but is a square modulo $41$.  By construction we have $[\pi_1],[\pi_2], [\pi_1\pi_2] \not\in B+C+D$, and hence $\codim(B+C+D\hookrightarrow [\mathbb{Q}^\times]) \geq 2$.  By Theorem \ref{th:main.unexceptional}, we therefore have $\rk(Y_4) \geq 1$.  
\end{example}


\end{document}